\documentclass[10pt]{amsart}

\usepackage[left]{lineno}

\usepackage{amsmath,amssymb,amsthm}
\usepackage[mathscr]{euscript}
\usepackage{color}
\usepackage{hyperref,doi}
\usepackage{epsf}
\usepackage{enumerate}
\usepackage{graphicx}
\usepackage{array}

\usepackage[margin=1in]{geometry}

\newtheorem{theorem}{Theorem}[section]
\newtheorem{prop}[theorem]{Proposition}

\newtheorem{conjecture}[theorem]{Conjecture}
\newtheorem{disconjecture}[theorem]{(Disproved) Conjecture}
\newtheorem{lemma}[theorem]{Lemma}
\newtheorem{corollary}[theorem]{Corollary}
\newtheorem{question}[theorem]{Question}
\theoremstyle{definition}
\newtheorem{definition}[theorem]{Definition}
\newtheorem{remark}[theorem]{Remark}
\newtheorem{example}[theorem]{Example}

\newcommand{\conv}[1]{\mathrm{conv}\left\{#1\right\}}

\newcommand{\C}{\mathbb{C}}
\newcommand{\R}{\mathbb{R}}
\newcommand{\Z}{\mathbb{Z}}

\newcommand{\Frac}[2]{\genfrac{}{}{}{}{#1}{#2}}

\newcommand{\Pq}{\Delta_{(1,q)}}
\newcommand{\Pp}{\Delta_{(1,p)}}
\newcommand{\cone}[1]{\mathrm{cone}\left(#1\right)}
\DeclareMathOperator{\lcm}{lcm}
\DeclareMathOperator{\ehr}{Ehr}
\DeclareMathOperator{\Vol}{Vol}


\newcommand\commentout[1]{}

\begin{document}



\title[Detecting the Integer Decomposition Property]{Detecting the Integer Decomposition Property and Ehrhart Unimodality in Reflexive Simplices}

\author{Benjamin Braun}
\address{Department of Mathematics\\
         University of Kentucky\\
         Lexington, KY 40506--0027}
\email{benjamin.braun@uky.edu}

\author{Robert Davis}
\address{Department of Mathematics\\
         Michigan State University\\
         East Lansing, MI 48824}
\email{davisr@math.msu.edu}

\author{Liam Solus}
\address{Matematik\\
         KTH\\
         Stockholm, Sweden}
\email{solus@kth.se}

\subjclass[2010]{Primary: 52B20, 05E40, 05A20, 05A15}


\date{31 May 2018}

\thanks{
Benjamin Braun was partially supported by grant H98230-16-1-0045 from the U.S. National Security Agency.
Liam Solus was partially supported by an NSF Division of Mathematical Sciences Postdoctoral Fellowship (DMS - 1606407).
We thank the anonymous referees for their thoughtful and helpful feedback.
}

\begin{abstract}
A long-standing open conjecture in combinatorics asserts that a Gorenstein lattice polytope with the integer decomposition property (IDP) has a unimodal (Ehrhart) $h^\ast$-polynomial.  
This conjecture can be viewed as a strengthening of a previously disproved conjecture which stated that any Gorenstein lattice polytope has a unimodal $h^\ast$-polynomial.  
The first counterexamples to unimodality for Gorenstein lattice polytopes were given in even dimensions greater than five by Musta{\c{t}}{\v{a}} and Payne, and this was extended to all dimensions greater than five by Payne.  
While there exist numerous examples in support of the conjecture that IDP reflexives are $h^\ast$-unimodal, its validity has not yet been considered for families of reflexive lattice simplices that closely generalize Payne's counterexamples.
The main purpose of this work is to prove that the former conjecture does indeed hold for a natural generalization of Payne's examples.
The second purpose of this work is to extend this investigation to a broader class of lattice simplices, for which we present new results and open problems.
\end{abstract}

\maketitle



\section{Introduction}


Two well-studied properties of the coefficients of polynomials with non-negative integer coefficients are symmetry and unimodality.
 A polynomial $a_0+a_1z+\cdots+a_dz^d$ is called \emph{symmetric} (\emph{with respect to $d$}) if $a_i = a_{d-i}$ for all $i\in[d]$. 
 A polynomial $a_0+a_1z+\cdots+a_dz^d$ is called \emph{unimodal} if there exists an index $j$ such that $a_i\leq a_{i+1}$ for all $i<j$ and $a_{i}\geq a_{i+1}$ for all $i\geq j$.  

Given a finitely generated $\C$-algebra $A=\oplus_{i=0}^\infty A_i$, we say $A$ is \emph{standard} if $A$ is generated by $A_1$ and \emph{semistandard} if $A$ is integral over the subalgebra generated by $A_1$.
The Hilbert series of a graded algebra $A$ is the generating function
\[
H(A;z):=\sum_{i=0}^\infty \dim_{\C}(A_i) z^i =\frac{\sum_{j} h_jz^j}{(1-z)^{\dim(A)}} \, .
\]
The polynomial $h(A;z):=\sum_{j} h_jz^j$ is called the \emph{$h$-polynomial} of $A$.
When $A$ is Cohen-Macaulay, each $h_j\in \Z_{\geq 0}$.
An active topic of research is developing combinatorial interpretations of the $h$-polynomial coefficients and their distribution.  
It is known that the $h$-polynomial of $A$ is symmetric when $A$ is Gorenstein, but the unimodality property is more subtle.
A long-standing open problem is the following conjecture posed by Brenti \cite[Conjecture 5.1]{brentisurvey}, inspired by a conjecture of Stanley~\cite{stanleylogconcave}.
It also appears in \cite[Conjecture 1.5]{hibiflawless}.
\begin{conjecture}[Brenti, \cite{brentisurvey}]
\label{conj:stanleygor}
For a standard graded Gorenstein integral domain $A$, $h(A;z)$ is unimodal.
\end{conjecture}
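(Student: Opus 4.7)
The plan is to approach Conjecture~\ref{conj:stanleygor} via the \emph{strong Lefschetz property} (SLP), which would suffice: if there exists $\ell\in A_1$ such that multiplication by $\ell^{d-2i}$ from $A_i$ to $A_{d-i}$ is an isomorphism for every $i\leq d/2$, then the Gorenstein symmetry $h_i=h_{d-i}$ combined with injectivity of $\cdot\ell:A_i\to A_{i+1}$ for $i<d/2$ forces the $h$-polynomial to be unimodal. So the target becomes: for every standard graded Gorenstein integral domain $A$, produce a strong Lefschetz element.

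First I would reduce to the Artinian case. Since $A$ is a standard graded Cohen--Macaulay domain, a generic linear system of parameters $\theta_1,\ldots,\theta_n$ forms a regular sequence, and $\bar A:=A/(\theta_1,\ldots,\theta_n)$ is a standard graded Artinian Gorenstein $\C$-algebra with the same $h$-polynomial. The domain hypothesis now enters only through what it allows us to say about generic hyperplane sections of $\operatorname{Proj} A$. Next I would try to construct a Lefschetz element in $\bar A$ by geometric means: if $X=\operatorname{Proj} A$ admitted a smooth projective resolution with compatible Gorenstein structure, a hyperplane class would serve via Hard Lefschetz; more realistically, I would look for an equivariant action (e.g.\ a torus, or an $\mathrm{SL}_2$-action from an $\mathfrak{sl}_2$-triple on $\bar A$) matching the Gorenstein pairing, since the existence of such a triple is equivalent to SLP. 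A parallel combinatorial line would be to find a flat degeneration to a nicer algebra (a monomial or squarefree one) that preserves both the Gorenstein property and a candidate Lefschetz element, reducing to a simplicial complex setting where one could invoke results of Stanley, Swartz, or Karu.

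The main obstacle is that the conjecture is widely open and every known route fails in the fully general setting. The domain hypothesis blocks the Mustata--Payne/Payne counterexamples, which are reduced but reducible (semistandard, not standard, Gorenstein rings arising from non-IDP Gorenstein polytopes), yet there is no known mechanism that converts ``integral domain'' directly into a Lefschetz element: generic hyperplane sections of a Gorenstein domain need not be domains, Artinian reduction destroys the domain hypothesis, and monomial degenerations typically destroy Gorensteinness. Without a geometric realization of $A$ as (a summand of) cohomology of a smooth projective variety, the input needed for Hard Lefschetz is simply unavailable.

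Given this, the pragmatic plan is incremental: rather than attacking Conjecture~\ref{conj:stanleygor} head-on, isolate structured families where the Gorenstein domain $A$ is transparent enough that an explicit Lefschetz element or an explicit injection $A_i\hookrightarrow A_{i+1}$ can be written down. The natural first testbed is $A$ equal to the Ehrhart ring of a reflexive IDP lattice simplex, for which IDP ensures standardness, reflexivity ensures Gorensteinness and symmetry, and the simplex structure gives an explicit monomial basis indexed by a fundamental parallelepiped; here one can hope to produce the required order-preserving injections combinatorially, which is the line this paper pursues as evidence toward the general conjecture.
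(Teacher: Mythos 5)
There is no proof of this statement to compare against: Conjecture~\ref{conj:stanleygor} is stated in the paper as a long-standing \emph{open} conjecture, and the paper never claims to prove it. Your proposal, read as a proof, has an unfixable gap that you yourself concede in the third paragraph: no strong Lefschetz element (nor any $\mathfrak{sl}_2$-triple, hard Lefschetz input, or Gorenstein-preserving degeneration) is ever constructed, and each mechanism you propose is immediately retracted as unavailable --- Artinian reduction destroys the domain hypothesis, generic hyperplane sections of a Gorenstein domain need not be domains, and monomial degenerations typically destroy Gorensteinness. So nothing is established beyond the (standard, correct) observation that SLP would imply unimodality for a symmetric $h$-polynomial. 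One concrete factual error is worth flagging: you say the domain hypothesis ``blocks'' the Musta\c{t}\u{a}--Payne/Payne counterexamples because they are ``reduced but reducible.'' In fact the semigroup algebra $\C[P]$ of any lattice polytope is a subalgebra of a Laurent polynomial ring and hence always an integral domain; what fails for Payne's non-IDP reflexive simplices is \emph{standardness}, which is exactly why the paper formulates the polytopal analogue as Conjecture~\ref{conj:hibiohsugi} (Gorenstein \emph{and} IDP) rather than appealing to the domain condition.

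Your closing pivot, however, is a fair description of what the paper actually does: it does not touch Conjecture~\ref{conj:stanleygor} in its algebraic generality, but instead proves Conjecture~\ref{conj:hibiohsugi} for the reflexive simplices $\Pq$ supported on two integers. Note that even there the paper's method is not Lefschetz-theoretic and produces no injections $A_i\hookrightarrow A_{i+1}$: it computes the $h^\ast$-polynomial explicitly from lattice points of the fundamental parallelepiped (Theorem~\ref{thm:hstarq}), characterizes IDP arithmetically (Theorem~\ref{thm:icreflexives}, Corollary~\ref{cor: reduction of theorem 4.1}), pins down the admissible $q$-vectors $(r^m,(1+rm)^{r-1})$ and $(1^m,(1+m)^x)$ (Theorem~\ref{thm: examples of sequences}), and then verifies unimodality by directly regrouping the resulting exponent sequence (Theorem~\ref{thm: main theorem}), together with affine free sum reductions (Theorem~\ref{thm:freesumdecomp}). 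Indeed, the paper's Question~\ref{quest:IDPimpliesh*} explicitly laments that no known proof derives unimodality \emph{from} IDP/standardness --- which is precisely the causal link your Lefschetz program would require and cannot supply.
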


A subset $P\subset\R^n$ is a $d$-dimensional \emph{convex lattice polytope} if $P$ arises as the convex hull of finitely many points in $\Z^n$ that together span an affine $d$-flat in $\R^n$.  
The \emph{Ehrhart function} of $P$ is the lattice point enumerator $i(P;t):=|tP\cap\Z^n|$, where $tP:=\{tp:p\in P\}$ denotes the $t^{th}$ dilate of the polytope $P$.  
It is well-known \cite{Ehrhart} that $i(P;t)$ is a polynomial in $t$ of degree $d$, and the corresponding \emph{Ehrhart series} of $P$ is the rational function
\[
\ehr_P(z) := \sum_{t\geq0}i(P;t)z^t = \frac{h_0^*+h_1^*z+\cdots+h_d^*z^d}{(1-z)^{d+1}},
\]
where the coefficients $h^\ast_0,h^\ast_1,\ldots,h^\ast_d$ are all nonnegative integers \cite{StanleyDecompositions}.  
The polynomial $h^\ast(P;z):=h_0^*+h_1^*z+\cdots+h_d^*z^d$ is called the \emph{(Ehrhart) $h^\ast$-polynomial} of $P$.  
Analogously, the coefficient vector $h^\ast(P) := (h_0^\ast,h_1^\ast,\ldots,h_d^\ast)$ is called the \emph{(Ehrhart) $h^\ast$-vector} of $P$. 
We will often say $P$ is \emph{$h^\ast$-unimodal} whenever $h^\ast(P;z)$ is unimodal.  
Since $h^\ast(P;z)$ arises via the enumeration of combinatorial data and has only nonnegative integer coefficients, researchers have studied combinatorial interpretations of the $h^\ast$-coefficients and their distribution.  
A current challenge in Ehrhart theory is to understand the geometric properties of a polytope that are necessary and/or sufficient for $h^\ast(P;z)$ to be simultaneously symmetric and unimodal \cite{BraunUnimodalSurvey}.

While Ehrhart series arise from a combinatorial context in polyhedral geometry, there is a well-known connection to commutative algebra.
The \emph{cone over $P$} is 
\[
\cone{P}:=\text{span}_{\R_{\geq 0}}\{(1,p):p\in P\}\subset \R\times \R^n,
\]
where we consider the new variable to be indexed at $0$, i.e. $w\in \R\times\R^n=\R^{1+n}$ is written $w=(w_0,w_1,\ldots,w_n)$.
Given a lattice polytope $P$ in $\R^n$, the \emph{semigroup algebra associated to $P$} is $\C[P]:=\C[x^w:w\in \cone{P}\cap \Z^{1+n}]$.
We grade $C[P]$ by $\deg(x_0^{w_0}\cdots x_n^{w_n})=w_0$.
With this grading, $\C[P]$ is a semistandard semigroup algebra, where the algebra generated by $\C[P]_1$ is $\C[x^{w}:w\in (1,P)\cap \Z^{1+n}]$; i.e., the semigroup algebra generated by integer points in $(1,P)$.
Given this, Conjecture~\ref{conj:stanleygor} manifests itself in the setting of polytopes that have symmetric $h^\ast$-polynomials and the integer decomposition property, which is defined as follows.

\begin{definition}
A polytope $P$ has the {\em integer decomposition property}, or {is IDP}, if for every positive integer $m$ and each $w \in mP \cap \Z^n$, there exist $m$ points $x_1,\ldots,x_m \in P \cap \Z^n$ for which $w = \sum x_i$.
\end{definition}
It is straightforward to verify that a lattice polytope $P$ is IDP if and only if $\C[P]$ is standard.
Regarding symmetry, a lattice polytope $P \subseteq \R^n$ containing $0$ in its interior is called \emph{reflexive} if its polar $P^* := \{ y \in \R^n\ : \ x^Ty \leq 1 \text{ for all } x \in P\}$ is also a lattice polytope.
In \cite{HibiDualPolytopes}, it is shown that a lattice polytope $P\subset\R^n$ is reflexive if and only if $P$ is full-dimensional (i.e. $d=n$), the origin is in its interior, and $h^\ast(P;z)$ is symmetric.  
Reflexive polytopes are a special case of the full-dimensional polytopes $P\subset\R^n$ for which $h^\ast(P;z)$ is symmetric, known as Gorenstein polytopes.
In this setting, Conjecture~\ref{conj:stanleygor} translates to the following statement, which is often attributed to Hibi and Ohsugi~\cite{hibiohsugiconj}.
\begin{conjecture}
\label{conj:hibiohsugi}
If $P$ is Gorenstein and IDP, then $h^\ast(P;z)$ is unimodal.
\end{conjecture}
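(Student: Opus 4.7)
The plan is to reformulate the statement in commutative algebra and attack it via a Lefschetz-type property. Under the hypotheses, $\C[P]$ is a standard graded Gorenstein domain with $h(\C[P];z)=h^\ast(P;z)$: standardness follows from IDP, and the Gorenstein property of $\C[P]$ follows from the Gorenstein hypothesis on $P$. Choosing a generic linear system of parameters $\theta_1,\ldots,\theta_{d+1}\in\C[P]_1$ produces an Artinian Gorenstein graded algebra $A:=\C[P]/(\theta_1,\ldots,\theta_{d+1})$ whose Hilbert function is precisely $h^\ast(P)$. Unimodality of $h^\ast(P;z)$ would then follow from the weak Lefschetz property for $A$, namely the existence of $\ell\in A_1$ for which multiplication by $\ell$ from $A_i$ to $A_{i+1}$ has maximal rank for each $i<d/2$. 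The problem thus reduces to establishing the existence of such an $\ell$.

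I would pursue two complementary approaches in parallel. The first is geometric, via triangulations: if $P$ admits a regular unimodular triangulation $\mathcal{T}$ using only lattice points of $P$, then $h^\ast(P;z)$ equals the $h$-polynomial of the simplicial complex $\mathcal{T}$, and one can try to deduce unimodality from the Hard Lefschetz theorem applied to the projective toric variety associated to the refined normal fan, which under these hypotheses should have at worst orbifold singularities where classical Hodge theory applies. The second approach is algebraic-combinatorial: take Stapledon's symmetric decomposition of $h^\ast(P;z)$ and use the Gorenstein hypothesis to collapse it to a single symmetric summand, then exploit IDP to construct explicit injections $A_i\hookrightarrow A_{i+1}$ for $i<d/2$ by lifting lattice representatives at dilation level $i$ to sums of degree-one generators at level $i+1$, producing a candidate Lefschetz form as a generic linear combination $\ell=\sum_{v\in P\cap\Z^n} c_v\,x^{(1,v)}$ and then verifying injectivity below the middle degree by a dimension-counting argument in each graded piece.

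The central obstacle, which explains why this conjecture has remained open since Hibi and Ohsugi raised it, is that Hodge-theoretic tools for Lefschetz properties of toric Gorenstein rings are currently available only when the underlying toric variety is simplicial (equivalently, when one has a unimodular triangulation of $P$). The IDP hypothesis, though algebraically natural, is not known to imply existence of a unimodular triangulation---this is a well-known open question of Oda---nor is it known to enforce the Lefschetz property by any other mechanism. Resolving the conjecture in full generality therefore likely requires either a new combinatorial construction of unimodular triangulations from the IDP hypothesis, or an extension of the recent Hodge-theoretic techniques that established the $g$-theorem for arbitrary simplicial spheres to the toric Gorenstein setting. As a realistic intermediate milestone along this plan I would test the strategy on structured families of reflexive simplices, where the Artinian reduction is explicitly computable, in order to isolate the precise geometric mechanism by which IDP excludes the non-unimodal behavior exhibited by Payne's non-IDP Gorenstein counterexamples.
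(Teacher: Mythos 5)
There is a genuine gap here, and it is unavoidable: the statement you were asked to prove is an \emph{open conjecture}, and your proposal does not prove it --- nor does the paper. Your plan correctly reformulates the problem (IDP gives standardness of $\C[P]$, the Gorenstein hypothesis gives a Gorenstein ring, and unimodality would follow from a weak Lefschetz element for the Artinian reduction), but both of your proposed routes terminate in problems that you yourself identify as open: the Hodge-theoretic route needs a regular unimodular triangulation, which IDP is not known to supply (Oda's question), and the algebraic route needs to establish the weak Lefschetz property, which is at least as hard as the conjecture itself. A proof attempt whose key step is ``this requires either a new combinatorial construction \ldots or an extension of recent Hodge-theoretic techniques'' has its gap precisely at that step; nothing is actually established beyond the (standard) translation into commutative algebra.

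What the paper actually does is very different in scope and in method: it makes no attempt on the general conjecture, and instead proves it for a concrete family of reflexive simplices --- the $\Delta_{(1,q)}$ whose $q$-vector is supported on two integers, which is the natural generalization of Payne's non-IDP counterexamples --- by completely elementary lattice-point arithmetic, with no Lefschetz or Hodge-theoretic input. Concretely: Theorem~\ref{thm:hstarq} gives an explicit floor-function formula for $h^\ast(\Pq;z)$ from the fundamental parallelepiped; Theorem~\ref{thm:icreflexives} characterizes IDP for $\Pq$ by a finite system of floor-function equations; Theorem~\ref{thm: examples of sequences} uses these to classify exactly which $q=(r^m,s^x)$ are IDP reflexive (either $r=1$, $s=1+m$, $x$ arbitrary, or $r\neq 1$, $s=1+rm$, $x=r-1$); and Theorem~\ref{thm: main theorem} proves unimodality in the nontrivial case by explicitly regrouping the terms $z^{\omega(b)}$ into a sum of symmetric unimodal polynomials, with the remaining case handled by the affine free sum machinery of Theorem~\ref{thm:freesumdecomp}. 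Ironically, your closing sentence --- testing the strategy on structured families of reflexive simplices where the reduction is explicitly computable --- describes exactly the program the paper carries out; but in your proposal it remains a stated intention rather than executed mathematics, so as a proof of the statement it fails, and as a match to the paper's content it addresses a much stronger claim than the paper establishes.
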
 
As in the case of graded Cohen-Macaulay algebras, unimodality of $h^\ast(P;z)$ has proven to be a more elusive property to characterize than symmetry when using only the geometry of $P$.
Since the symmetry property of $h^\ast(P;z)$ reduces the number of inequalities to be verified by one-half, there has been much research into when $h^\ast(P;z)$ is both symmetric and unimodal~\cite{BraunUnimodalSurvey}. 
It was shown by Musta{\c{t}}{\v{a}} and Payne~\cite{MustataPayne} that there exist reflexive polytopes whose $h^\ast$-polynomials are not unimodal.  
These results were then extended to reflexive simplices in every dimension greater than five by Payne \cite{Payne}.
This disproved the following (earlier) conjecture of Hibi~\cite{Hibi}.
\begin{disconjecture}
[Hibi, \cite{Hibi}]
\label{conj: hibi}
If $P$ is Gorenstein then $h^\ast(P;z)$ is unimodal.  
\end{disconjecture}
Thus, Conjecture~\ref{conj:hibiohsugi} is a reasonable strengthening of Conjecture~\ref{conj: hibi} to consider in light of the counterexamples presented by Payne \cite{Payne} and its previous proposal in the algebraic context by Brenti (Conjecture~\ref{conj:stanleygor}).
Indeed, one may show that none of the counterexamples provided by Musta{\c{t}}{\v{a}} and Payne are IDP. 
While the literature is ripe with examples of lattice polytopes supporting Conjecture~\ref{conj:hibiohsugi}, surprisingly, there has been little investigation as to whether or not the conjecture holds for reflexive lattice simplices that closely generalize the counterexamples constructed by Payne.  
The purpose of the present paper is to carefully identify a natural generalization of Payne's examples, and prove that Conjecture~\ref{conj:hibiohsugi} does indeed hold for this family of reflexive lattice simplices. 

The generalization of Payne's examples that we will consider here is a subcollection $\mathcal{Q}$ of the reflexive lattice simplices whose associated toric varieties are weighted projective spaces.
Consequently, each simplex $\Delta\in\mathcal{Q}$ can be associated to an integer partition $q(\Delta)$ by using a classification system for reflexive simplices developed by Conrads \cite{conrads}.  
The collection $\mathcal{Q}$ can be stratified by the number of parts used in the integer partitions $q(\Delta)$.  
We will see that $q(\Delta)$ for Payne's examples have only two distinct parts, making the family of all $\Delta$ where $q(\Delta)$ has only two distinct parts a close generalization of Payne's counterexamples.
Our main result is to show that Conjecture~\ref{conj:hibiohsugi} holds for this family.

The remainder of this paper is outlined as follows: 
In Subsection~\ref{subsec: the reflexive simplices}, we recall the counterexamples to Conjecture~\ref{conj: hibi} presented by Payne in \cite{Payne}, and we present the generalizing family $\mathcal{Q}$.  
In Subsections~\ref{subsec: h*-polynomials} and~\ref{subsec: IDP for q's}, we present a formula for the $h^\ast$-polynomial and a characterization of IDP for all simplices in $\mathcal{Q}$, respectively.  
In Section~\ref{sec: q-vectors with fixed support}, we prove some general results on IDP and $h^\ast$-unimodality for simplices in $\mathcal{Q}$ whose associated integer partitions $q(\Delta)$ have a fixed set of parts. 
In Section~\ref{sec: support on two integers}, we prove Conjecture~\ref{conj:hibiohsugi} for our closest generalization of Payne's examples; i.e., for all simplices $\Delta\in\mathcal{Q}$ whose associated partitions $q(\Delta)$ have two parts.
We then end with a discussion of future directions in Section~\ref{subsec: future directions}.


\section{IDP and Ehrhart Theory for the Reflexive Simplices $\Pq$} 
\label{sec: idp for reflexive simplices}

\subsection{The reflexive simplices $\Delta_{(1,q)}$}
\label{subsec: the reflexive simplices}
Given a polytope $P \subseteq \R^n$ with vertices $v_0,\ldots,v_k$, the {\em height} of a point $w \in \cone{P}$ is its zero-th coordinate $w_0$.
If $P$ is a lattice simplex with vertices $v_0,v_1,\ldots,v_n$, the \emph{fundamental parallelepiped} of $\cone{P}$ is
\[
\Pi_P:=\left\{\sum_{i=0}^n\lambda_i(1,v_i)\ :\ 0\leq \lambda_i<1  \right\}\, .
\]
The fundamental parallelepiped tiles $\cone{P}$ through nonnegative integer combinations of $\{(1,v_i)\}_{i=0}^n$.  
Consequently, if $P$ is a lattice simplex then 
\begin{equation}
\label{eqn: fpp}
h^\ast(P;z)=\sum_{w\in \Pi_P\cap \Z^n}z^{w_0} \, .
\end{equation}

In this paper, we will focus on a family of reflexive simplices $\mathcal{Q}$ that can be defined as follows.  
Let $q=(q_1,q_2,\ldots,q_n)$ be a weakly increasing sequence of positive integers satisfying the condition
\[
	q_j \mid (1+\sum_{i \neq j}q_i ) 
\]
for all $j=1,\ldots,n$.
For such a vector, the simplex 
\[
	\Pq:=\conv{e_1,e_2,\ldots,e_n,-\sum_{i=1}^nq_ie_i}, \, 
\]
where $e_i\in \R^n$ is the $i$-th standard basis vector, is reflexive.
When the $q$-vector is understood, we will often label the vertices of $\Pq$ as $v_i := e_i$ and $v_0 := -\sum_{i=1}^nq_ie_i$.  
The collection of simplices $\mathcal{Q}$ is important in algebra and geometry since it is contained in the family of simplices whose associated toric varieties are weighted projective spaces \cite{conrads}.  
Combinatorially, they are significant since they contain the counterexamples to Conjecture~\ref{conj: hibi} developed by Payne \cite{Payne}.  
In particular, Payne showed that for integers $r\geq0$, $s\geq 3$, and $k\geq r+2$ the reflexive simplex $\Delta_{(1,q)}$ for 
\begin{equation}
\label{eqn: payne}
q=(\underbrace{1,1,\ldots,1}_{sk-1 \text{ times}},\underbrace{s,s,\ldots,s}_{r+1 \text{ times}}) \, 
\end{equation}
is not $h^\ast$-unimodal.

It follows from (\ref{eqn: fpp}) that the \emph{normalized volume} of $\Delta_{(1,q)}$ (i.e. the value $h^\ast(\Delta_{(1,q)};1) = n!\Vol(\Delta_{(1,q)})$) is equal to $1+q_1+\cdots+q_n$ \cite[Proposition 4.4]{NillSimplices}.  
Consequently, the simplices $\Delta_{(1,q)}$ are naturally stratified via their normalized volume by way of \emph{integer partitions}; i.e., for every $\Delta_{(1,q)}$ there exists a partition of the integer $n!\Vol(\Delta_{(1,q)})-1$ with parts $r_1<r_2<\cdots< r_k$ such that
\[
q = (r_1^{x_1},r_2^{x_2},\ldots,r_k^{x_k}):=(\underbrace{r_1,r_1,\ldots,r_1}_{x_1\text{ times}},\underbrace{r_2,r_2,\ldots,r_2}_{x_2\text{ times}},\ldots,\underbrace{r_k,r_k,\ldots,r_k}_{x_k\text{ times}}).
\]
So as to speak formally from this perspective, we make the following definition.
\begin{definition}
\label{def:support}
We say that both $q$ and $\Pq$ are \emph{supported} by the vector $r = (r_1,\ldots,r_k)$ (or the integers therein) if there exist positive integers $r_1<r_2<\cdots <r_k$ and $x_1,\ldots,x_k$ such that 
\[
q=(q_1,\ldots,q_n)=(r_1^{x_1},r_2^{x_2},\ldots,r_k^{x_k}).
\]
An \emph{$r$-vector} is any vector of positive integers $r = (r_1,\ldots,r_k)$ in which $r_1<r_2<\cdots <r_k$.  
\end{definition}
It follows that the natural generalization of Payne's examples are those $\Delta_{(1,q)}$ supported by two integers.  
In Section~\ref{sec: support on two integers}, we will prove Conjecture~\ref{conj:hibiohsugi} for this generalization of Payne's examples.  
In Section~\ref{sec: q-vectors with fixed support}, we will prove some results about $q$-vectors with a fixed support $r$.


\subsection{The $h^\ast$-polynomials for $\Pq$}
\label{subsec: h*-polynomials}

In this subsection, we establish basic properties of $\Pq$ with respect to $h^\ast$-polynomials.
We begin with a formula for $h^\ast(\Delta_{(1,q)};z)$ in terms of the entries of the vector $q$.  

\begin{theorem}
\label{thm:hstarq}
The $h^\ast$-polynomial of $\Pq$ is 
\[
h^\ast(\Pq;z) = \sum_{b=0}^{q_1+\cdots +q_n}z^{w(b)}
\]
where 
\[
w(b):=b-\sum_{i=1}^n\left\lfloor\frac{q_ib}{1+q_1+\cdots +q_n} \right\rfloor \, .
\]
\end{theorem}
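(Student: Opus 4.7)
The plan is to apply the fundamental parallelepiped formula (\ref{eqn: fpp}) directly, by parametrizing the lattice points of $\Pi_{\Pq}$ via a single integer parameter $b$. Write the vertices of $\cone{\Pq}$ at height one as $u_0=(1,-q_1,\ldots,-q_n)$ and $u_i=(1,e_i)$ for $i=1,\ldots,n$. A point in $\Pi_{\Pq}$ has the form $\sum_{i=0}^n\lambda_i u_i$ with $\lambda_i\in[0,1)$; setting $t:=\lambda_0$, such a point equals
\[
\Bigl(t+\sum_{i=1}^n\lambda_i,\;\lambda_1-tq_1,\;\lambda_2-tq_2,\ldots,\;\lambda_n-tq_n\Bigr).
\]

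The first step is to translate the lattice condition into a constraint on $t$ alone. Integrality of the $i$-th spatial coordinate forces $\lambda_i\equiv tq_i\pmod{1}$, and since $\lambda_i\in[0,1)$ this uniquely determines $\lambda_i=\{tq_i\}$, the fractional part, and makes the $i$-th coordinate equal to $-\lfloor tq_i\rfloor$. Substituting back, the height coordinate becomes
\[
t+\sum_{i=1}^n\{tq_i\}=t\Bigl(1+\sum_{i=1}^n q_i\Bigr)-\sum_{i=1}^n\lfloor tq_i\rfloor.
\]
Requiring this height to be an integer (with the floor terms already integral) then forces $tN\in\Z$, where $N:=1+q_1+\cdots+q_n$. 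Combined with $t\in[0,1)$, this gives exactly $t=b/N$ for $b\in\{0,1,\ldots,N-1\}$, establishing a bijection between $\Pi_{\Pq}\cap\Z^{n+1}$ and $\{0,1,\ldots,q_1+\cdots+q_n\}$.

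The final step is to read off the height of the lattice point corresponding to $b$, which by the computation above is
\[
w(b)=\frac{b}{N}\cdot N-\sum_{i=1}^n\left\lfloor\frac{q_i b}{N}\right\rfloor=b-\sum_{i=1}^n\left\lfloor\frac{q_i b}{1+q_1+\cdots+q_n}\right\rfloor,
\]
and plugging this into (\ref{eqn: fpp}) gives the claimed formula.

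I do not expect a serious obstacle here: the argument is essentially a careful bookkeeping of the two integrality constraints (spatial coordinates versus height) arising from the parametrization by $(\lambda_0,\ldots,\lambda_n)\in[0,1)^{n+1}$. The only subtlety worth flagging is that solving the spatial integrality first reduces the problem to a single real parameter $t$, after which the height integrality pins down $t$ to a discrete set of size $N$, automatically matching the known normalized volume $n!\Vol(\Pq)=1+q_1+\cdots+q_n$ cited from \cite{NillSimplices}.
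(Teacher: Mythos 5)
Your proof is correct and follows essentially the same route as the paper: parametrize $\Pi_{\Pq}\cap\Z^{n+1}$ by $\lambda_0$, deduce $\lambda_i=\{q_i\lambda_0\}$ from spatial integrality, and read off the height. The only cosmetic difference is that you pin down $\lambda_0\in\frac{1}{N}\Z$ from the height-integrality condition, whereas the paper gets the denominator $N$ up front via Cramer's rule; both yield the same bijection with $\{0,1,\ldots,N-1\}$.
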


\begin{proof}
Let $(1,v_1),(1,v_2),\ldots,(1,v_n)$, and $(1,v_0)$ denote, respectively, the columns of 
\[
\left[
\begin{array}{cccccc}
1 & 1 & 1 &  \cdots & 1 & 1 \\
1 & 0 & 0 & \cdots & 0 & -q_1 \\
0 & 1 & 0 & \cdots & 0 & -q_2 \\
0 & 0 & 1 & \cdots & 0 & -q_2 \\
\vdots & \vdots & \vdots & \ddots & \vdots & \vdots \\
0 & 0 & 0 & \cdots & 1 & -q_n 
\end{array}\right]\, .
\]
By \cite[Proposition 4.4]{NillSimplices} and Cramer's rule, every point in $\Pi_{\Pq}$ is of the form
\[
p:=\sum_{i=0}^n\lambda_i(1,v_i)
\]
where $0\leq \lambda_i<1$ for all $i$ and $\lambda_i=\frac{b_i}{1+q_1+\cdots + q_n}$ for some $b_i=0,1,\ldots,q_1+\cdots +q_n$.
If $p \in \Z^{1+n}$, then it must follow that for $i \geq 1$
\[
\lambda_i=q_i\lambda_0-\lfloor q_i\lambda_0 \rfloor \, .
\]
Hence, the choice of $\lambda_0$ determines the point $p$, and we write 
\[
\lambda_0:=\frac{b}{1+q_1+\cdots + q_n} \, .
\]
The height of the point corresponding to a given value of $b$ is the first coordinate, which is easily computed to be
\[
b-\sum_{i=1}^n\left\lfloor\frac{q_ib}{1+q_1+\cdots +q_n} \right\rfloor \, .
\]
Recalling equation~(\ref{eqn: fpp}) completes the proof.
\end{proof}


\subsection{The Integer Decomposition Property and $\Pq$}
\label{subsec: IDP for q's}

The following theorem (Theorem~\ref{thm:icreflexives}) provides a characterization of IDP reflexive $\Delta_{(1,q)}$ in terms of the vector $q$.
Our main tool for investigating $\Pq$ is given by Corollary~\ref{cor: reduction of theorem 4.1} below, a necessary but not sufficient relaxation of Theorem~\ref{thm:icreflexives}.

\begin{theorem}
\label{thm:icreflexives}
The reflexive simplex $\Pq$ is IDP if and only if for every $j=1,\ldots,n$, for all $b=1,\ldots,q_j-1$ satisfying
\begin{equation}
\label{eqn 1}
b\left(\Frac{1+\sum_{i\neq j}q_i}{q_j} \right) - \sum_{i\neq j}\left\lfloor \Frac{bq_i}{q_j} \right\rfloor \geq 2 
\end{equation}
there exists a positive integer $c<b$ satisfying the following equations, where the first is considered for all  $1\leq i\leq n$ with $i\neq j$:
\begin{equation}
\label{eqn 2}
\left\lfloor \Frac{bq_i}{q_j} \right\rfloor - \left\lfloor \Frac{cq_i}{q_j} \right\rfloor = \left\lfloor \Frac{(b-c)q_i}{q_j} \right\rfloor, \text{  and}
 \end{equation}
 \begin{equation}
 \label{eqn 3}
c\left(\Frac{1+\sum_{i\neq j}q_i}{q_j} \right) - \sum_{i\neq j}\left\lfloor \Frac{cq_i}{q_j} \right\rfloor = 1.
\end{equation}
\end{theorem}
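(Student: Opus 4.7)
My plan is to prove this characterization by reducing IDP to an inductive decomposition of the parallelepiped lattice points of $\cone{\Pq}$, stratified by which coordinate $\lambda_j$ vanishes. Recall from Theorem~\ref{thm:hstarq} the parametrization $p_b = \sum_{i=0}^n \lambda_i^{(b)}(1,v_i)$ of lattice points of $\Pi_{\Pq}$ for $b \in \{0,\ldots,N-1\}$, where $N := 1+\sum q_i$, $q_0:=1$, and $\lambda_i^{(b)} = (q_i b \bmod N)/N$. The reflexivity hypothesis $q_j \mid 1+\sum_{i\neq j}q_i$ is equivalent to $q_j \mid N$, which forces $\lambda_j^{(b)} \in (q_j/N)\Z$; consequently the parallelepiped points with $\lambda_j^{(b)}=0$ are exactly $\{p_{b''M_j} : b'' \in \{0,1,\ldots,q_j-1\}\}$ for $M_j := N/q_j$, and a direct computation identifies $w(b''M_j)$ with the left-hand side of (\ref{eqn 1}). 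Using the relation $(1,0)=(1/N)\sum_{i=0}^n q_i(1,v_i)$, one further sees that $p_b-(1,0)\in\cone{\Pq}$ whenever all $\lambda_i^{(b)}>0$ (since then $\lambda_i^{(b)}\geq q_i/N$), so by induction on height the IDP property for $\Pq$ reduces to verifying that each $p_{b''M_j}$ with $w(b''M_j)\geq 2$ decomposes into height-1 cone lattice points.

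For the forward direction, suppose $\Pq$ is IDP and write $p_{b''M_j}=\sum_k \pi_k$ as a sum of height-1 cone lattice points. By the uniqueness of conical representations, summing coefficients of $(1,v_j)$ yields $0 = \lambda_j^{(b''M_j)}$, so each $\pi_k$ has zero $(1,v_j)$-coefficient and hence lies in the subcone $C_j:=\cone{(1,v_i):i\neq j}$. Moreover, the strict inequality $b''<q_j$ gives $\lambda_i^{(b''M_j)}<1$ for every $i\neq j$, which rules out any summand being a vertex lift $(1,v_i)$ (that would contribute $1$ to the $(1,v_i)$-coordinate). So every $\pi_k$ is a non-vertex height-1 lattice point of $C_j$, which by an analogous parametrization argument is necessarily of the form $p_{c_kM_j}$ with $c_k \in \{1,\ldots,q_j-1\}$ and $w(c_kM_j)=1$. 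Matching $(1,v_0)$-coordinates yields $b''=\sum_k c_k$; setting $c := c_1$, the required identity $\lfloor cq_i/q_j \rfloor + \lfloor (b''-c)q_i/q_j \rfloor = \lfloor b''q_i/q_j \rfloor$ for each $i\neq j$ (equivalent to (\ref{eqn 2})) follows by squeezing between the general inequality $\lfloor c q_i/q_j \rfloor + \lfloor (b''-c) q_i/q_j \rfloor \leq \lfloor b'' q_i/q_j \rfloor$ and the enforced tightness $\sum_k \lfloor c_k q_i/q_j \rfloor = \lfloor b'' q_i/q_j \rfloor$; equation (\ref{eqn 3}) is just $w(c_1 M_j)=1$, and $c<b''$ because the decomposition has at least two summands.

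For the reverse direction I would induct on $w(b)$. If all $\lambda_i^{(b)}>0$, subtract $(1,0)$ and reduce to the parallelepiped representative of $p_b-(1,0)$, which has strictly smaller height, and apply the inductive hypothesis. If $\lambda_j^{(b)}=0$ for some $j$, write $b=b''M_j$ and invoke the hypothesized $c$ to obtain $p_{b''M_j}=p_{cM_j}+p_{(b''-c)M_j}$ directly from (\ref{eqn 2}); since $p_{cM_j}$ has height $1$ by (\ref{eqn 3}) and $p_{(b''-c)M_j}$ has strictly smaller height $w(b)-1$ with $b''-c\in\{1,\ldots,q_j-1\}$, we recurse to conclude $p_b$ decomposes.

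The main obstacle I anticipate is the forward-direction identification that no summand in an IDP decomposition of $p_{b''M_j}$ can be a vertex lift $(1,v_i)$; this is what forces the decomposition to use only the non-vertex height-1 parallelepiped points of $C_j$, and it relies on the strict inequality $b''<q_j$ ensuring $\lambda_i^{(b''M_j)}<1$ for all $i\neq j$ -- precisely why the theorem restricts $b$ to $\{1,\ldots,q_j-1\}$ rather than to all parallelepiped indices.
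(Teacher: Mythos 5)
Your proposal is correct and follows essentially the same route as the paper's proof: reflexivity lets you peel off copies of $(1,0)$ to reduce IDP to the decomposability of the fundamental parallelepiped points of each facet cone, these points are parametrized by $b\in\{1,\ldots,q_j-1\}$ with height given by the left side of \eqref{eqn 1}, and matching coordinates in the (unique) conical representation yields \eqref{eqn 2} and \eqref{eqn 3}. Your treatment of the forward direction (ruling out vertex summands and the floor-function squeeze forcing equality in \eqref{eqn 2}) is somewhat more explicit than the paper's, but it is the same underlying argument.
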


\begin{proof}
Recall that IDP for a lattice simplex $P$ is equivalent to the property that every lattice point in $\Pi_{P}$ arises as a sum of lattice points in $(1,P)$.
Let $g\in \Pi_{\Pq}$.
Since $\Pq$ is reflexive, we may subtract $(1,0,0,\ldots,0)$ from $g$ until we reach a lattice point $p$ on the boundary of $\cone{\Pq}$.
This point $p$ must lie in the fundamental parallelepiped for a face of $\Pq$. 
Hence, $\Pq$ is IDP if and only if every facet of $\Pq$ is IDP.

Observe that the facet with vertices $\{e_1,\ldots,e_n\}$ is unimodular, and thus it is IDP.
The remaining facets are of the form 
\[
F_j:=\conv{e_1,\ldots,e_{j-1},e_{j+1},\ldots,e_n,-\sum_{i=1}^nq_ie_i}
\]
for $j=1,\ldots,n$.
Thus we are looking for necessary and sufficient conditions for $F_j$ to be IDP.
For $i\in [n]\setminus \{j\}$, set $v_i:=e_i$, and set $v_0:=-\sum_{i=1}^nq_ie_i$.
Every lattice point in $\Pi_{F_j}$ is of the form
\[
r=\sum_{\substack{0\leq i \leq n \\ i\neq j}}\lambda_i(1,v_i) \, .
\]
Since the vector $(1,e_j)$ is not a summand for $r$, and since $r$ is a lattice point, $\lambda_0\cdot (-q_j)$ must be an integer.
Hence,
$
\lambda_0=b/q_j
$
for some $b$ in $\{0,1,2,\ldots,q_j-1\}$.
Note that if $b=0$, $r$ is in the fundamental parallelepiped for the facet with vertices $\{e_1,\ldots,\widehat{e_j},\ldots,e_n\}$, and hence equal to the zero vector.

We next show that for every choice of $b$ between $1$ and $q_j-1$, setting $\lambda_0=b/q_j$ yields a unique lattice point of $\Pi_{F_j}$.
Since every entry of $r$ is integral and $0\leq \lambda_i <1$, it follows that 
\[
\lambda_i=\frac{bq_i}{q_j}-\left\lfloor \frac{bq_i}{q_j} \right\rfloor \, .
\]
Thus, as in the proof of Theorem~\ref{thm:hstarq}, the value of $\lambda_0$ determines the remaining $\lambda_i$'s.
When $\lambda_0=b/q_j$, the height of the resulting point is
\[
\frac{b}{q_j}+\sum_{\substack{1\leq i \leq n \\ i\neq j}}\left( \frac{bq_i}{q_j}-\left\lfloor \frac{bq_i}{q_j} \right\rfloor \right) \, ,
\]
which is an integer if and only if
\[
\frac{b}{q_j}+\sum_{\substack{1\leq i \leq n \\ i\neq j}} \frac{bq_i}{q_j} = b\left(\frac{1+\sum_{\substack{1\leq i \leq n \\ i\neq j}}q_i}{q_j}\right) 
\]
is an integer.
Since $\Pq$ is reflexive, we know that $q_j$ divides $\displaystyle 1+\sum_{\substack{0\leq i \leq n \\ i\neq j}}q_i$, and thus this height is an integer.
Hence, for every choice of $b$ above, $r$ is a lattice point given by
\[
r = r_b := \left[
\begin{array}{c}
b \left(\frac{1+\sum_{i\neq j}q_i}{q_j}\right)-\sum_{ i\neq j}\left(\left\lfloor \frac{bq_i}{q_j} \right\rfloor \right) \\ [2mm]
-\left\lfloor \frac{bq_1}{q_j} \right\rfloor \\
\vdots \\
-\left\lfloor \frac{bq_{j-1}}{q_j} \right\rfloor \\ [3mm]
-b \\ [1mm]
-\left\lfloor \frac{bq_{j+1}}{q_j} \right\rfloor \\
\vdots \\
-\left\lfloor \frac{bq_n}{q_j} \right\rfloor 
\end{array}
\right] \, .
\]
The simplex $F_j$ is IDP if and only if for every $r_b$ at height greater than or equal to two, there exists an $r_c\in \Pi_{F_j}$ at height one and an $r_d\in \Pi_{F_j}$ such that
$
r_b-r_c=r_d \, .
$
Given $b$ and considering this vector equation entry-by-entry, this is equivalent to solving the following system with integers $c$ and $d$ between $1$ and $q_j-1$:
\begingroup
\addtolength{\jot}{1em}
\begin{align}
\label{eqn:sub} -1 + \left( b \left(\frac{1+\sum_{i\neq j}q_i}{q_j}\right)-\sum_{ i\neq j}\left\lfloor \frac{bq_i}{q_j} \right\rfloor \right) & = d \left(\frac{1+\sum_{i\neq j}q_i}{q_j}\right)-\sum_{ i\neq j}\left\lfloor \frac{dq_i}{q_j} \right\rfloor,  \\ 
\label{eqn:floor}  \left\lfloor \frac{bq_{i}}{q_j} \right\rfloor  - \left\lfloor \frac{cq_{i}}{q_j} \right\rfloor & = \left\lfloor \frac{dq_{i}}{q_j} \right\rfloor \, , \, \,  i\neq j, \mbox{ and}\\ 
\label{eqn:unit} c \left(\frac{1+\sum_{i\neq j}q_i}{q_j}\right)-\sum_{ i\neq j}\left\lfloor \frac{cq_i}{q_j} \right\rfloor &= 1.
\end{align}
\endgroup

Substituting \eqref{eqn:unit} for the ``$1$'' on the left-hand side of \eqref{eqn:sub}, and substituting \eqref{eqn:floor} for each of the ``$\left\lfloor \frac{dq_i}{q_j} \right\rfloor$'' on the right-hand side of \eqref{eqn:sub}, yields $b-c=d$.
Thus, we can reduce this system of equations to an equivalent system of equations:
\begingroup
\addtolength{\jot}{1em}
\begin{align}
\label{eqn:floor2}  \left\lfloor \frac{bq_{i}}{q_j} \right\rfloor  - \left\lfloor \frac{cq_{i}}{q_j} \right\rfloor & = \left\lfloor \frac{(b-c)q_{i}}{q_j} \right\rfloor \, , \, \, i\neq j, \mbox{ and}\\ 
\label{eqn:unit2}   c \left(\frac{1+\sum_{i\neq j}q_i}{q_j}\right)-\sum_{ i\neq j}\left\lfloor \frac{cq_i}{q_j} \right\rfloor  &= 1.
\end{align}
\endgroup
Hence, $\Pq$ is IDP if and only if each of its facets is IDP, and this occurs if and only if for every $j=1,\ldots,n$, for all $b=1,\ldots,q_j-1$ corresponding to a lattice point at height at least two there exists a positive integer $c$ solving \eqref{eqn:floor2} and \eqref{eqn:unit2}.
\end{proof}

At this point, we will frequently write $\left\{\frac{a}{b}\right\}$ to denote the fractional part of $a/b$.

\begin{corollary}
\label{cor: reduction of theorem 4.1}
Let $\Pq$ be reflexive.  
If $\Pq$ is IDP then for all $j=1,2,\ldots,n$
\[
\frac{1}{q_j}+\sum_{i\neq j}\left\{\frac{q_i}{q_j}\right\}=1.
\]
For any vector $q$ of increasing positive integers that satisfies these equations for all $j=1,2,\ldots,n$, the simplex $\Pq$ is reflexive.
However, this condition is necessary but not sufficient for $\Pq$ to be IDP.
\end{corollary}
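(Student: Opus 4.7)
The plan is to read off all three parts of the corollary directly from Theorem~\ref{thm:icreflexives} specialized to the parameter value $b=1$, together with one elementary algebraic manipulation for the reflexivity statement. First, for the necessity claim, I would identify the quantity $\frac{1}{q_j}+\sum_{i\neq j}\left\{\frac{q_i}{q_j}\right\}$ with the height of the lattice point $r_1\in\Pi_{F_j}$ associated to $b=1$ in the proof of Theorem~\ref{thm:icreflexives}. Since $\Pq$ is reflexive, this height is a positive integer, and because $1/q_j>0$ together with each $\{q_i/q_j\}\geq 0$, it is at least $1$. The cases $q_j=1$ make the equation trivial, so assume $q_j\geq 2$. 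If the height were at least $2$, Theorem~\ref{thm:icreflexives} applied to $b=1$ would demand a positive integer $c<1$ satisfying (\ref{eqn 2}) and (\ref{eqn 3}), which is impossible; hence the height of $r_1$ must equal $1$, which is exactly the displayed equation.

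For the assertion that the equations themselves force $\Pq$ to be reflexive, I would multiply the identity $\frac{1}{q_j}+\sum_{i\neq j}\left\{\frac{q_i}{q_j}\right\}=1$ through by $q_j$, use $q_j\{q_i/q_j\}=q_i-q_j\lfloor q_i/q_j\rfloor$, and rearrange to obtain
\[
1+\sum_{i\neq j}q_i \;=\; q_j\Bigl(1+\sum_{i\neq j}\lfloor q_i/q_j\rfloor\Bigr).
\]
This exhibits $q_j$ as a divisor of $1+\sum_{i\neq j}q_i$ for every $j$, which is precisely the divisibility condition characterizing reflexivity of $\Pq$.

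For the final claim that this necessary condition is not sufficient for IDP, the key observation is that the equation is the $b=1$ shadow of Theorem~\ref{thm:icreflexives}: it records only that $r_1$ has height $1$ and hence is available as a height-$1$ generator on every facet, while IDP additionally requires, for each $j$ and each $b\in\{2,\ldots,q_j-1\}$ for which $r_b$ has height at least $2$, a positive integer $c<b$ satisfying both (\ref{eqn 2}) and (\ref{eqn 3}). A counterexample to sufficiency therefore consists of any reflexive $\Pq$ for which the displayed equations all hold yet some such $b\geq 2$ admits no valid $c$; verifying non-IDP reduces to exhausting the finitely many candidate values of $c$ in Theorem~\ref{thm:icreflexives} for that $b$. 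The main obstacle I anticipate is not the logical structure but the arithmetic search within the rather constrained family of $q$-vectors satisfying the $b=1$ equations to locate an explicit $q$ in which a genuine higher-$b$ obstruction occurs.
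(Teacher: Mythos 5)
Your treatment of the first two assertions is correct and is essentially the paper's own argument: you read the displayed equation off as the statement that the height-$b=1$ lattice point $r_1\in\Pi_{F_j}$ from the proof of Theorem~\ref{thm:icreflexives} sits at height exactly $1$ (it is a positive integer by reflexivity, and if it were $\geq 2$ the theorem would demand a positive integer $c<1$, which cannot exist), and you recover the divisibility condition $q_j\mid\bigl(1+\sum_{i\neq j}q_i\bigr)$ by clearing denominators, which gives reflexivity. Both steps match the paper.

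The gap is in the third assertion. To prove that the condition is \emph{not sufficient} for IDP you must actually exhibit a reflexive $\Pq$ satisfying all the equations that fails to be IDP; describing what such a counterexample would look like and noting that "the arithmetic search" remains to be done leaves the claim unproven. The paper supplies the explicit witness $q=(2,2,15,20,20)$: one checks that $\frac{1}{q_j}+\sum_{i\neq j}\left\{\frac{q_i}{q_j}\right\}=1$ holds for each $j$, lists the $15$ lattice points of $\Delta_{(1,q)}$, and verifies that the point $(-1,-1,-8,-10,-10)\in 2\Delta_{(1,q)}$ (arising from the fundamental parallelepiped at height $2$) is not a sum of two lattice points of $\Delta_{(1,q)}$. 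Without some such concrete example your proof establishes only necessity and the reflexivity consequence, not the "but not sufficient" clause of the corollary.
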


\begin{proof}
By Theorem~\ref{thm:icreflexives}, if $b=1$ satisfies~\eqref{eqn 1} then trivially no $c$ exists satisfying $0<c<b$ and equations~\eqref{eqn 2} and~\eqref{eqn 3}, meaning $\Delta_{(1,q)}$ cannot be IDP.  
Thus, whenever $\Delta_{(1,q)}$ is IDP, $b=1$ must satisfy~\eqref{eqn 3}, which is equivalent to 
\[
\frac{1}{q_j}+\sum_{i\neq j}\left\{\frac{q_i}{q_j}\right\} = 1 \, .
\]
Since this equation implies that the divisibility condition $q_j\mid (1+\sum_{i\neq j}q_i)$ holds, reflexivity of $\Pq$ follows.

To show that this condition is not sufficient to establish IDP, consider the vector $q=(2,2,15,20,20)$.
It is straightforward to compute that the lattice points in $\Pq$ are the columns of the following matrix.
\[
\left[
\begin{array}{ccccccccccccccc}
1 & 0 & 0 & 0 & 0 & -2  & 0 & 0  & 0  & 0  & 0  & 0  & -1  &-1  &-1  \\
0 & 1 & 0 & 0 & 0 & -2  & 0 & 0  & 0  & 0  & 0  & 0  & -1 & -1 &-1 \\
0 & 0 & 1 & 0 & 0 & -15 & 0 & 0  & -1 & -1 & -2 & -3 & -7&-8 &-9 \\
0 & 0 & 0 & 1 & 0 & -20 & 0 & -1 & -1 & -2 & -3 & -4 & -10& -11 &-12 \\
0 & 0 & 0 & 0 & 1 & -20 & 0 & -1 & -1 & -2 & -3 & -4 & -10 &-11 &-12 
\end{array}
\right] \, .
\]
Further, the point 
\[
\left[
\begin{array}{cccccc}
1 & 0 & 0 & 0 & 0 & -2   \\
0 & 1 & 0 & 0 & 0 & -2  \\
0 & 0 & 1 & 0 & 0 & -15 \\
0 & 0 & 0 & 1 & 0 & -20  \\
0 & 0 & 0 & 0 & 1 & -20  
\end{array}
\right] 
\left[
\begin{array}{c}
1/15 \\
1/15 \\
0 \\
2/3 \\
2/3 \\
8/15
\end{array}
\right]
=
\left[
\begin{array}{c}
-1 \\
-1 \\
-8 \\
-10 \\
-10
\end{array}
\right]
\in 
2\Delta_{(1,2,2,15,20,20)} \, .
\]
It is straightforward to verify that this point is not the sum of exactly two lattice points in $\Delta_{(1,2,2,15,20,20)}$, and thus this simplex is not IDP.
However, $(2,2,15,20,20)$ satisfies our linear system.
\end{proof}

\begin{remark}
An equivalent formulation of Corollary~\ref{cor: reduction of theorem 4.1} is that if $q$ corresponds to a reflexive IDP simplex, then for each $j=1,\ldots,n$ the sum
\[
1+\sum_{i\neq j}(q_i\bmod q_j)
\]
must be equal to $q_j$.
In order for the reflexive condition to be satisfied, this sum must be equal to a multiple of $q_j$.
Thus, this necessary condition for IDP reflexive is a strengthening of the divisibility condition characterizing reflexivity.
\end{remark}

\begin{remark}
Corollary~\ref{cor: reduction of theorem 4.1} implies that for IDP $\Delta_{(1,q)}$ and $b=2,3,\ldots,q_j-1$ the choice of $c=1$ is always a potential solution to~\eqref{eqn 2}, as it satisfies equation \eqref{eqn 3}.  
\end{remark}

\begin{example}[Revisiting Payne's simplices]
Recall that Payne presented the simplices $\Delta_{(1,q)}$ with $q$ as defined in equation~(\ref{eqn: payne}) as counterexamples to Conjecture~\ref{conj: hibi}.  
Using Proposition~\ref{thm:hstarq}, we see that
\[
h^\ast(\Pq;z)=\sum_{b=0}^{s(k+r+1)-1}z^{b-(r+1)\left\lfloor sb/s(k+r+1) \right\rfloor} 
\]
from which it is straightforward to compute that
\[
h^\ast(\Pq;z)=(1+z^k+z^{2k}+\cdots +z^{(s-1)k})(1+z+z^2+\cdots +z^{k+r})\, .
\]
Non-unimodality for most of these simplices follows immediately from the conditions on $r$, $b$, and $s$.
One can verify that $\Pq$ is not IDP using the property that $h^\ast_1=1$.  
Since this implies that the only lattice points in $\Delta_{(1,q)}$ are the vertices and the unique interior point of $\Pq$.  
Hence, the vector $(1,0,0,\ldots,0)$ is the only vector in $\Pi_{\Pq}$ at height $1$.
This prevents the existence of the ``$c$'' value required in Theorem~\ref{thm:icreflexives}.
\end{example}


\section{$q$-Vectors With Fixed Support}
\label{sec: q-vectors with fixed support}
In Subsection~\ref{subsec: the reflexive simplices}, we established a stratification of the simplices $\Delta_{(1,q)}$ in terms of their support vectors $r$.  
In this section, we present some first results on the IDP and $h^\ast$-unimodality conditions for $\Delta_{(1,q)}$ from this perspective.  
Our observations in this section center around the interplay between $\Delta_{(1,q)}$ admitting an \emph{affine free sum decomposition} into lower-dimensional $\Delta_{(1,q)}$ and exhibiting IDP and/or $h^\ast$-unimodality.  
As described below, admitting the former property can allow one to recursively detect the latter properties.  
In this section, we will show that only finitely many reflexive IDP $\Delta_{(1,q)}$ with support vector $r$ fail to admit such a free sum decomposition.  
This suggests that the stratification of $\Delta_{(1,q)}$ by their support vectors is a desirable perspective from which to analyze Conjecture~\ref{conj:hibiohsugi} for reflexive $\Delta_{(1,q)}$.  
In particular, we will use these results to prove Conjecture~\ref{conj:hibiohsugi} for $\Delta_{(1,q)}$ supported on two integers in Section~\ref{sec: support on two integers}.  

Our first two results in this section establish that there are infinitely many reflexive simplices supported by each $r$-vector, and that the $r$-vector and multiplicity vector $(x_1,\ldots,x_k)$ can be used to test our necessity condition for IDP. 

\begin{prop}
\label{prop:infinitereflexive}
Fix an $r$-vector $r = (r_1,\ldots,r_k)$  with $\gcd(r_1,\ldots,r_k)=1$.
There are infinitely many reflexive $\Pq$ supported by $r$.
\end{prop}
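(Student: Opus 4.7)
The plan is to translate reflexivity into a single linear congruence on the multiplicities $(x_1,\ldots,x_k)$ and then use $\gcd(r_1,\ldots,r_k)=1$ to produce infinitely many solutions. Observe first that $q_j \mid q_j$, so the reflexivity condition $q_j \mid 1 + \sum_{i \neq j} q_i$ is equivalent to $q_j \mid 1 + \sum_i q_i$. When $q = (r_1^{x_1}, \ldots, r_k^{x_k})$, the right-hand side is the $j$-independent quantity $1 + \sum_{\ell=1}^k x_\ell r_\ell$, so imposing divisibility for every $j$ collapses to the single condition
\[
\sum_{\ell=1}^k x_\ell\, r_\ell \;\equiv\; -1 \pmod{L}, \qquad L := \lcm(r_1,\ldots,r_k),
\]
to be solved in positive integers $x_1,\ldots,x_k$.

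For existence of at least one solution, I would invoke the hypothesis $\gcd(r_1,\ldots,r_k)=1$, which forces the residues $\bar r_1, \ldots, \bar r_k$ to generate $\Z/L\Z$ as an abelian group. In any finite abelian group, the sub-monoid obtained by nonnegative integer combinations of a set coincides with the subgroup generated by the set, since each generator has finite additive order (so $-\bar r_\ell$ is itself a positive multiple of $\bar r_\ell$). Consequently, there exist $a_1, \ldots, a_k \in \Z_{\geq 0}$ with $\sum_\ell a_\ell r_\ell \equiv -1 \pmod L$. Setting $x_\ell := a_\ell + L$ preserves the congruence (since $L r_\ell \equiv 0 \pmod L$) while guaranteeing $x_\ell \geq L \geq 1$, yielding at least one reflexive $\Pq$ supported by $r$.

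To pass from one solution to infinitely many, given a valid multiplicity vector $(x_1,\ldots,x_k)$ I would consider the family $(x_1 + mL,\, x_2, \ldots, x_k)$ for $m = 0, 1, 2, \ldots$. Each member satisfies the same congruence because $L \cdot r_1 \equiv 0 \pmod L$, and each has total length $n_m = mL + \sum_\ell x_\ell$, which is strictly increasing in $m$. Hence the corresponding simplices $\Pq$ live in pairwise distinct ambient dimensions and are certainly distinct polytopes.

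The only technical nuance is the positivity requirement $x_\ell \geq 1$ imposed by the definition of ``supported by $r$''; without it the statement would be immediate from the mod-$L$ solvability. The ``shift by $L$'' trick circumvents this with minimal overhead, and no deeper obstruction appears once the reflexivity condition has been reformulated as the single congruence above.
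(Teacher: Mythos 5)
Your proof is correct. It rests on the same reduction as the paper's: since $q_j \mid q_j$, reflexivity of $\Pq$ for $q=(r_1^{x_1},\ldots,r_k^{x_k})$ amounts to $\lcm(r_1,\ldots,r_k) \mid 1+\sum_\ell x_\ell r_\ell$. Where you diverge is in how you produce positive integer solutions. The paper invokes the Frobenius coin exchange problem to conclude that every sufficiently large integer --- in particular every sufficiently large multiple of $L=\lcm(r_1,\ldots,r_k)$ --- is of the form $1+\sum_\ell x_\ell r_\ell$ with all $x_\ell>0$, and lets the multiple vary to get infinitely many simplices. You instead solve only the congruence $\sum_\ell x_\ell r_\ell \equiv -1 \pmod{L}$: the hypothesis $\gcd(r_1,\ldots,r_k)=1$ gives $\gcd(r_1,\ldots,r_k,L)=1$, so the residues generate $\Z/L\Z$, and your observation that a submonoid of a finite abelian group generated by nonnegative combinations is already a subgroup yields a nonnegative solution, which the shift by $L$ upgrades to a positive one; perturbing $x_1$ by multiples of $L$ then gives simplices in unboundedly many dimensions. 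Your route is self-contained and slightly more elementary (it needs only solvability of a linear congruence, not representability of actual integers by the numerical semigroup), at the cost of the small extra steps handling positivity and distinctness that the Frobenius statement delivers for free. Both arguments are complete and yield the same conclusion.
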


\begin{proof}
By the positive solution to the Frobenius coin exchange problem \cite[Chapter 1]{BeckRobinsCCD}, there exists a positive integer $M$ such that for all $m\geq M$, there exist positive integers $x_1,\ldots,x_k$ such that $m=1+\sum_{i=1}^kx_ir_i$.
Thus, there exists a positive integer $L$ such that for all $\ell\geq L$, there exist positive integers $x_1,\ldots,x_k$ such that $\ell \lcm(r_1,\ldots,r_k)=1+\sum_{i=1}^kx_ir_i$.
For any such $\ell$, for all $i=1,\ldots,k$ we have that
\[
r_i \, \mid \, \ell \lcm(r_1,\ldots,r_k)=1+\sum_{j=1}^kx_jr_j \, .
\]
Hence, there are infinitely many $q=(r_1^{x_1},r_2^{x_2},\ldots,r_k^{x_k})$ such that the divisibility condition for reflexivity of $\Pq$ is satisfied.
\end{proof}

As a simple example, we consider when $q$ is supported by only one integer.

\begin{theorem}\label{thm:oneinteger}
If $\Pq$ is reflexive where $q$ is supported by $r=(r_1)$, then $r_1=1$.
For each $q$-vector of the form $q=(1,1,\ldots,1)$, we have $\Pq$ is IDP, reflexive, and $h^\ast$-unimodal.
\end{theorem}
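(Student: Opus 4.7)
The plan is to dispatch both assertions by plugging the specific forms of $q$ directly into the tools already developed in this section. The entire proof should be short because the relevant quantifiers collapse.

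For the first claim, suppose $\Pq$ is reflexive and $q = (r_1, r_1, \ldots, r_1)$ with $n$ equal entries. The reflexivity divisibility condition $q_j \mid 1 + \sum_{i\neq j} q_i$ becomes $r_1 \mid 1 + (n-1) r_1$, which forces $r_1 \mid 1$, and hence $r_1 = 1$.

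For the second claim, fix $q = (1, 1, \ldots, 1)$ of length $n$. Reflexivity is immediate from the divisibility condition since $1$ divides every integer. To determine the $h^\ast$-polynomial, I would apply Theorem~\ref{thm:hstarq}: since $1 + q_1 + \cdots + q_n = n+1$ and $b$ ranges over $\{0, 1, \ldots, n\}$, each floor term $\lfloor bq_i/(n+1) \rfloor = \lfloor b/(n+1) \rfloor$ vanishes, so $w(b) = b$. Consequently,
\[
h^\ast(\Pq; z) = 1 + z + z^2 + \cdots + z^n,
\]
which is symmetric and (trivially) unimodal. For IDP, apply Theorem~\ref{thm:icreflexives}: since $q_j = 1$ for every $j$, the inner quantifier ``for all $b = 1, \ldots, q_j - 1$'' ranges over the empty set, so the characterizing hypothesis is satisfied vacuously and $\Pq$ is IDP.

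There is no substantial obstacle: both assertions reduce to unwinding the divisibility condition and the two theorems proved earlier. The only points to verify carefully are that $\lfloor b/(n+1) \rfloor = 0$ holds uniformly over the summation range $b \in \{0,1,\ldots,n\}$ and that vacuous quantification is permitted in the statement of Theorem~\ref{thm:icreflexives}, both of which are immediate.
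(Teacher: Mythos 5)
Your proof is correct and follows essentially the same route as the paper: the divisibility condition forces $r_1=1$, reflexivity for $q=(1,\ldots,1)$ is immediate, and the $h^\ast$-vector $(1,1,\ldots,1)$ follows from Theorem~\ref{thm:hstarq}. The only (harmless) difference is the IDP step, where you invoke the vacuous quantification over $b=1,\ldots,q_j-1$ in Theorem~\ref{thm:icreflexives} while the paper instead observes that every facet of $\Pq$ is a unimodular simplex; both arguments are valid.
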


\begin{proof}
If $q=(r_1,\ldots,r_1)$ with $r_1>1$, then $q$ fails the necessary divisibility condition to be reflexive.
Hence, we must have $r_1=1$.
In this case, since $q_j=1$ for all $j$, reflexivity is immediate by the divisibility condition.
It is straightforward to verify that the $h^\ast$-vector in this case is $(1,1,\ldots,1)$.
Finally, since every facet of $\Pq$ is a unimodular simplex, it follows that $\Pq$ is IDP.
\end{proof}

The following lemma is equivalent to Corollary~\ref{cor: reduction of theorem 4.1}.

\begin{lemma}
\label{lem:linearsystem}
If the $r$-vector $r = (r_1,\ldots,r_k)$ supports a reflexive IDP simplex $\Pq$ then there exists a vector of positive integers $x = (x_1,\ldots,x_k)$ satisfying the $k\times k$ system of linear equations $Rx=b$ where
\[
b_i:=r_i-1,
\]
and 
\[
R_{j,i}:=\left\{
\begin{array}{ll}
0 & \text{ if } i=j, \\
r_i & \text{ if } i<j, \\
r_i\bmod r_j & \text{ if } i>j.
\end{array} 
\right. \, 
\]
If a solution to this system of equations consisting of positive integers exists, then it corresponds to a reflexive $\Pq$.
However, this condition is necessary but not sufficient to establish IDP.
\end{lemma}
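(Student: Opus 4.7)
The plan is to show that the displayed system $Rx = b$ is simply Corollary~\ref{cor: reduction of theorem 4.1} rewritten in terms of the support data $(r,x)$, after which both the necessity for IDP and the reflexivity claim follow at once from that corollary. I would fix a row index $j \in [k]$ and aim to produce the $j$-th equation by specializing the identity
\[
\frac{1}{q_\mu} + \sum_{i\neq \mu}\left\{\frac{q_i}{q_\mu}\right\} = 1
\]
to any coordinate $\mu\in[n]$ with $q_\mu = r_j$ (at least one such $\mu$ exists since $x_j\geq 1$).

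The key bookkeeping step is to regroup the sum $\sum_{i\neq \mu}\{q_i/q_\mu\}$ by the distinct values $r_1,\ldots,r_k$. Because $\{r_j/r_j\} = 0$, the single index $\mu$ being excluded contributes nothing to the original sum, so it collapses to $\sum_{m=1}^{k} x_m \{r_m/r_j\}$. I would then split this by whether $m<j$, $m=j$, or $m>j$: the diagonal term vanishes; for $m<j$, $r_m<r_j$ gives $\{r_m/r_j\} = r_m/r_j$; and for $m>j$, $\{r_m/r_j\} = (r_m \bmod r_j)/r_j$. Multiplying through by $r_j$ and rearranging converts the identity into
\[
\sum_{m<j} x_m\, r_m \;+\; \sum_{m>j} x_m\,(r_m \bmod r_j) \;=\; r_j - 1,
\]
which is exactly the $j$-th row of $Rx=b$. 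Letting $j$ range over $[k]$ captures every condition in Corollary~\ref{cor: reduction of theorem 4.1} (one for each distinct value appearing in $q$), and conversely the $j$-th row encodes the condition for every coordinate $\mu$ with $q_\mu = r_j$.

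From this equivalence, the necessity of the existence of positive integer $x$ for IDP reflexive $\Pq$ is immediate, and the assertion that any positive integer solution gives a reflexive $\Pq$ follows directly from the second half of Corollary~\ref{cor: reduction of theorem 4.1}. The final claim, that the condition is not sufficient for IDP, is already witnessed by the non-IDP vector $q=(2,2,15,20,20)$ from the proof of Corollary~\ref{cor: reduction of theorem 4.1}: here $r=(2,15,20)$ and $x=(2,1,2)$ satisfy $Rx = (1,14,19) = b$, yet $\Pq$ fails IDP. The only subtlety in the derivation is ensuring that the index excluded from the sum corresponds to a copy of $r_j$ so that removing it does not perturb the count of the other values; the rest is a routine unpacking of fractional parts.
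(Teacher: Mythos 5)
Your proposal is correct and follows exactly the paper's route: the paper's entire proof is the one-line observation that the lemma is a restatement of Corollary~\ref{cor: reduction of theorem 4.1} in the notation $(r_1^{x_1},\ldots,r_k^{x_k})$, and your argument simply carries out that translation explicitly (correctly handling the excluded index via $\{r_j/r_j\}=0$ and reusing the $(2,2,15,20,20)$ example for non-sufficiency). Nothing further is needed.
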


\begin{proof}
This lemma is a restatement of  Corollary~\ref{cor: reduction of theorem 4.1} using the notation $(r_1^{x_1},r_2^{x_2},\ldots,r_k^{x_k})$.
\end{proof}

It is important to observe that some reflexive IDP $\Pq$'s admit decompositions into reflexive IDP $\Pq$'s of smaller dimension, in the following sense.
Let $P,Q\subset\R^n$ be two lattice polytopes.  
We say that $P\oplus Q:=\conv{P\cup Q}$ is an \emph{affine free sum} if, up to unimodular equivalence, $P\cap Q = \{0\}$ and the affine span of $P$ and $Q$ are orthogonal coordinate subspaces of $\R^n$.
Suppose further that $P\subset \R^n$ and $Q\subset\R^m$ are reflexive polytopes with $0\in P$ and the vertices of $Q$ labeled as $v_0,v_1,\ldots,v_\ell$.  
For every $i=0,1,\ldots,m$, we define the polytope
\[
P\ast_i Q:=\conv{(P \times 0^m)\cup(0^n\times Q-v_i)}\subset\R^{n+m}.
\]
The following theorem indicates that affine free sum decompositions can be detected from the $q$-vector defining $\Pq$.
\begin{theorem}[Braun, Davis \cite{BraunDavisReflexive}]
\label{thm:freesumdecomp}
If $\Pp$ and $\Pq$ are full-dimensional reflexive simplices with $p=(p_1,\ldots,p_n)$ and $q=(q_1,\ldots,q_m)$, respectively, then $\Pp *_0 \Pq$ is a reflexive simplex $\Delta_{(1,y)}$ with $y=(p_1,\ldots,p_n,sq_1,\ldots,sq_m)$ where $s = 1+ \sum_{j=1}^n p_j$.
Moreover, if $\Delta_{(1,y)}$ arises in this form, then it decomposes as a free sum.
Further, if $\Pp$ and $\Pq$ are reflexive, IDP and $h^\ast$-unimodal, then so is $\Pp *_0 \Pq$.
\end{theorem}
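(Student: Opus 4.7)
The plan is to establish the three assertions in sequence: the structural identification of $\Pp *_0 \Pq$ as $\Delta_{(1,y)}$, the converse decomposition, and the inheritance of IDP and $h^\ast$-unimodality.

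For the first assertion, I would write down the vertices of $\Pp *_0 \Pq$ in $\R^{n+m}$ explicitly from the definition of $*_0$, noting that the vertex $v_0$ of $\Pq$ is translated to the origin and then absorbed into the relative interior of the copy of $\Pp$; the resulting $n+m+1$ vertices are affinely independent and so form a simplex. The key structural step is to compute the unique (up to scalar) linear dependence among these vertices: after normalizing so that one chosen vertex has coefficient $1$, the remaining coefficients turn out to be exactly $(p_1,\ldots,p_n,sq_1,\ldots,sq_m)$ with $s=1+\sum_{j=1}^n p_j$. The appearance of the scaling factor $s$ on the last $m$ entries reflects the lattice index incurred when expressing the ``$\Pq$-side'' vertices in a lattice basis adapted to the combined simplex. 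A Cramer's-rule or determinant argument then gives the normalized volume of $\Pp *_0 \Pq$ as $(1+\sum p_j)(1+\sum q_j)=s(1+\sum q_j)=1+\sum y_i$, consistent with the claim, and the divisibility conditions $y_i\mid(1+\sum_{j\neq i}y_j)$ follow from the analogous conditions on $p$ and $q$ separately, certifying reflexivity of $\Delta_{(1,y)}$.

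For the converse, I would begin with $\Delta_{(1,y)}$ where $y=(p_1,\ldots,p_n,sq_1,\ldots,sq_m)$ and $s=1+\sum p_j$, and observe that the reflexivity divisibilities for $y$ force the corresponding divisibility conditions for $p$ and $q$, so that $\Pp$ and $\Pq$ are themselves reflexive. The decomposition is then exhibited by identifying the two orthogonal coordinate subspaces spanned by $e_1,\ldots,e_n$ and $e_{n+1},\ldots,e_{n+m}$ in $\R^{n+m}$, together with the translation realizing the $*_0$ construction.

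For the third assertion, the plan is to establish the factorization $h^\ast(\Pp *_0 \Pq;z)=h^\ast(\Pp;z)\cdot h^\ast(\Pq;z)$ via a height-preserving bijection between lattice points in the fundamental parallelepiped of $\cone{\Pp *_0 \Pq}$ and pairs of lattice points drawn from the fundamental parallelepipeds of $\cone{\Pp}$ and $\cone{\Pq}$; this bijection is a consequence of the two factor simplices living in orthogonal coordinate subspaces joined at the single identified point. Given this factorization, preservation of $h^\ast$-unimodality reduces to the classical result (see Stanley) that the product of two symmetric unimodal polynomials with nonnegative coefficients is itself symmetric and unimodal. The IDP property is inherited by the same bijection: a lattice point at height $k$ in $\cone{\Pp *_0 \Pq}$ splits into $\Pp$- and $\Pq$-components at heights summing to $k$, each of which admits a height-one decomposition by the IDP hypotheses on the individual factors, and these decompositions reassemble to decompose the original lattice point. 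The main obstacle is the first assertion, specifically nailing down the explicit unimodular equivalence responsible for the factor $s$ on the last $m$ entries of $y$; this is the genuine combinatorial and lattice-theoretic content of the theorem, whereas once the free-sum description of the fundamental parallelepiped is in hand, the preservation statements in the third part follow by routine (if slightly intricate) bookkeeping.
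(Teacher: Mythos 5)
First, a point of comparison: the paper does not prove this statement at all --- it is imported verbatim from Braun--Davis \cite{BraunDavisReflexive} --- so there is no internal proof to measure your proposal against. Judged on its own terms, your overall architecture (identify the weight vector of the free sum, deduce reflexivity from the divisibility conditions on $p$ and $q$, exhibit the converse decomposition, and prove the factorization $h^\ast(\Pp *_0 \Pq;z)=h^\ast(\Pp;z)\,h^\ast(\Pq;z)$ via a height-additive bijection of fundamental parallelepipeds, finishing with the classical fact that a product of symmetric unimodal nonnegative polynomials is symmetric and unimodal) is the standard and correct route; the volume identity $1+\sum_i y_i = s\bigl(1+\sum_j q_j\bigr)$ and the divisibility checks go through exactly as you indicate, and the IDP/unimodality inheritance is indeed routine once the parallelepiped bijection is in place.

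There is, however, a concrete error in your first step as written. In the coordinates of the definition, the vertices of $\Pp *_0 \Pq$ are $a_i=(e_i,0)$ for $i=1,\ldots,n$, $a_0=(-\sum_i p_ie_i,\,0)$, and $b_j=(0,\,e_j+\sum_k q_ke_k)$ for $j=1,\ldots,m$, and the unique linear dependence among them is $a_0+\sum_i p_i a_i=0$: comparing the last $m$ coordinates forces $\bigl(\sum_j\beta_j\bigr)\bigl(1+\sum_k q_k\bigr)=0$, so every coefficient on the $b_j$ vanishes. Thus ``the unique linear dependence among these vertices'' does \emph{not} produce $(1,p_1,\ldots,p_n,sq_1,\ldots,sq_m)$; what it shows is that the origin lies on the \emph{boundary} of $\Pp *_0 \Pq$, in the relative interior of the $n$-dimensional face $\Pp\times 0^m$, so the simplex is not reflexive in these coordinates. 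The missing step is to locate the unique interior lattice point, namely $z=(0^n,q_1,\ldots,q_m)$, whose barycentric coordinates are $\bigl(\tfrac{1}{st},\tfrac{p_i}{st},\tfrac{q_j}{t}\bigr)$ with $t=1+\sum_j q_j$, translate $z$ to the origin, and verify that $a_1-z,\ldots,a_n-z,b_1-z,\ldots,b_m-z$ form a basis of $\Z^{n+m}$ (their matrix is block unitriangular, so this is immediate). Only after this recentering does the dependence $\tfrac{1}{st}(a_0-z)+\sum_i\tfrac{p_i}{st}(a_i-z)+\sum_j\tfrac{q_j}{t}(b_j-z)=0$ deliver the weights $(1,p_1,\ldots,p_n,sq_1,\ldots,sq_m)$ and hence the unimodular identification with $\Delta_{(1,y)}$. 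This is precisely the ``genuine lattice-theoretic content'' you flag at the end of your proposal, but the recipe you actually describe would compute the wrong dependence and miss the factor $s$ entirely.
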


\begin{remark}
If $P$ and $Q$ are lattice polytopes, then $P \times \{0\}$ will always be a facet of the affine free sum $P *_i Q$ for any $i$.
Now, if a polytope is IDP, then all of its facets are IDP.
Thus, if $P *_i Q$ is IDP, then $P$ must also be IDP.
However, it is possible to have two non-IDP, non-$h^\ast$-unimodal reflexive simplices produce a unimodal free sum through the $*_i$ operation: $\Pq$ with $q = (1,1,1,1,1,3)$ is not IDP and has a non-unimodal $h^\ast$-polynomial, but $\Pq *_0 \Pq$ has $q$-vector $(1,1,1,1,1,3,9,9,9,9,9,27)$ and $h^\ast$-vector $(1, 2, 5, 6, 10, 10, 13, 10, 10, 6, 5, 2, 1)$.
Moreover, the ordinary free sum of $\Pq$ with itself yields the same $h^\ast$-vector.
\end{remark}

Theorem~\ref{thm:freesumdecomp} shows that many reflexive, IDP, $h^\ast$-unimodal $\Pq$'s have these properties because they arise as affine free sums.
If a reflexive IDP $\Pq$ is an affine free sum of two reflexive IDP $\Pq$'s of smaller dimension, then in order to infer $h^\ast$-unimodality of the sum, this property of the summands must be separately verified.
Nonetheless, it is reasonable to focus attention on those reflexive IDP $\Pq$'s that do not arise as affine free sums, as these are fundamental examples that must be dealt with in any proof of Conjecture~\ref{conj:hibiohsugi}.
Fortunately, as the following two results indicate, for each $r$-vector there are at most finitely many such reflexive IDP simplices to consider.

\begin{prop}
\label{prop:freesumreflexive}
Fix an $r$-vector $r = (r_1,\ldots,r_k)$ for which $r_i\mid r_k$ for all $i\in\{1,\ldots,k-1\}$.
If $q=(r_1^{x_1},\ldots,r_k^{x_k})$ corresponds to a reflexive IDP simplex, then $\Pq$ is an affine free sum of the simplices defined by the $q$-vectors $(r_1^{x_1},\ldots,r_{k-1}^{x_{k-1}})$ and $(1^{x_k})$.
Further, the simplices defined by the $q$-vectors $(r_1^{x_1},\ldots,r_{k-1}^{x_{k-1}})$ and $(1^{x_k})$ are both IDP reflexive simplices.  
\end{prop}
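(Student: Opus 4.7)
The plan is to realize $\Pq$ as an affine free sum via Theorem~\ref{thm:freesumdecomp}, with the ``small'' summand $\Pp$ for $p = (r_1^{x_1}, \ldots, r_{k-1}^{x_{k-1}})$ and the ``large'' summand $\Delta_{(1,q')}$ for $q' = (1^{x_k})$. That theorem produces a simplex with $q$-vector $(p_1, \ldots, p_n, s q'_1, \ldots, s q'_m)$ where $s = 1 + \sum_\ell p_\ell = 1 + \sum_{\ell=1}^{k-1} x_\ell r_\ell$. Hence the whole decomposition hinges on the single numerical identity
\begin{equation*}
r_k \;=\; 1 + \sum_{\ell=1}^{k-1} x_\ell r_\ell,
\end{equation*}
which forces $s = r_k$ and makes the predicted $y$-vector equal $q$ on the nose.

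To establish this identity, I would invoke the IDP necessary condition of Corollary~\ref{cor: reduction of theorem 4.1} at any index $j$ with $q_j = r_k$. The divisibility hypothesis $r_\ell \mid r_k$ for $\ell < k$ is what makes the sum collapse cleanly: each fractional part $\{q_i / r_k\}$ equals $r_\ell / r_k$ when $q_i = r_\ell < r_k$ and equals $0$ when $q_i = r_k$. Collecting contributions by multiplicity and clearing the denominator $r_k$ turns the corollary into exactly the target identity.

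Before invoking Theorem~\ref{thm:freesumdecomp} I must check its hypothesis that both summands are reflexive. The simplex $\Delta_{(1,q')}$ is reflexive and IDP by Theorem~\ref{thm:oneinteger}. For $\Pp$, the divisibility condition $r_\ell \mid 1 + \sum_{m\neq \ell,\, m<k} x_m r_m$ at each $\ell < k$ follows from the corresponding condition for $\Pq$: simply subtract the contribution $x_k r_k$ of the largest parts, which is divisible by $r_\ell$ precisely because of the standing hypothesis $r_\ell \mid r_k$. With both summands reflexive, Theorem~\ref{thm:freesumdecomp} yields a free sum whose $q$-vector equals $q$ by the identity above, so $\Pq = \Pp \ast_0 \Delta_{(1,q')}$.

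It remains to confirm IDP for the two summands. Theorem~\ref{thm:oneinteger} handles $\Delta_{(1,q')}$ directly. For $\Pp$, the observation recorded in the remark following Theorem~\ref{thm:freesumdecomp} applies verbatim: $\Pp \times \{0\}$ sits inside the IDP polytope $\Pp \ast_0 \Delta_{(1,q')} = \Pq$ as the face cut out by zeroing the last block of coordinates, and IDP passes to this face, so $\Pp$ is IDP. The main obstacle I anticipate is the clean collapse of the fractional-part sum in Corollary~\ref{cor: reduction of theorem 4.1} into the identity $r_k = 1 + \sum_\ell x_\ell r_\ell$; once that step is secured, the rest of the argument reduces to matching data to the hypotheses of Theorem~\ref{thm:freesumdecomp}.
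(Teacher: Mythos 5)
Your proposal is correct and follows essentially the same route as the paper: the identity $r_k = 1+\sum_{\ell<k}x_\ell r_\ell$ comes from the necessary IDP condition (Corollary~\ref{cor: reduction of theorem 4.1}, which the paper invokes in its matrix form as Lemma~\ref{lem:linearsystem}), the decomposition then follows from Theorem~\ref{thm:freesumdecomp}, and IDP of the smaller summand is obtained by the same face argument. The only nitpick is that the collapse $\{r_\ell/r_k\}=r_\ell/r_k$ holds simply because $r_\ell<r_k$, not because of the divisibility hypothesis, which is instead what you correctly use to get reflexivity of $\Pp$.
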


\begin{proof}
Using the notation of Lemma~\ref{lem:linearsystem}, observe that after canceling denominators in $Rx=b$ our matrix equation has the form
\[
\left[
\begin{array}{ccccc|c}
0      & r_{2}\bmod r_{1} & r_{3}\bmod r_{1} & \cdots           & r_{k-1}\bmod r_{1} & 0 \\
r_1    & 0               & r_{3}\bmod r_{2} & \cdots           &  r_{k-1}\bmod r_{2} & \vdots \\
r_1    & r_2             & 0               & \ddots  &  \vdots           & \vdots \\
r_1    & r_2             & \ddots          & \ddots           & \vdots             & \vdots \\
\vdots & \vdots         &  \ddots            & \ddots           & r_{k-1}\bmod r_{k-2} & \vdots \\
r_1    & r_2             & r_3         & \cdots                 & 0             & 0 \\
\hline
r_1 & r_2 & r_3 & \cdots & r_{k-1} & 0
\end{array}
\right]
\left[
\begin{array}{c}
x_1 \\
x_2 \\
x_3 \\
\vdots \\
x_k
\end{array}
\right]
=
\left[
\begin{array}{c}
r_1-1 \\
r_2-1 \\
r_3-1 \\
\vdots \\
r_k-1
\end{array}
\right] \, .
\]
Notice that the upper-left block of this matrix and the first $k-1$ entries of the right-hand vector form the system from Lemma~\ref{lem:linearsystem} for the values $r_1<r_2<\cdots <r_{k-1}$.
Thus, the existence of a positive integer solution $(x_1,\ldots,x_k)$ to our matrix equation above implies that $(x_1,\ldots,x_{k-1})$ are the multiplicities of a $q$-vector supported by $(r_1,r_2,\ldots,r_{k-1})$.
Further, the final row of the matrix equation above implies that $r_k=1+\sum_{i=1}^{k-1}x_ir_i$, from which it follows that $q=(r_1^{x_1},\ldots,r_k^{x_k})$ arises from the free sum of $p=(r_1^{x_1},\ldots,r_{k-1}^{x_{k-1}})$ and $q=(1^{x_k})$ by Theorem~\ref{thm:freesumdecomp}.
Since each $r_i|r_k$, this also implies reflexivity for $p=(r_1^{x_1},\ldots,r_{k-1}^{x_{k-1}})$.

Theorem~\ref{thm:oneinteger} implies that the simplex defined by $(1^{x_k})$ is always reflexive and IDP.
It only remains to check that the simplex given by $(r_1^{x_1},\ldots,r_{k-1}^{x_{k-1}})$ is IDP.  
We know that $\Delta_{(1,q)} = \Delta_{(1,p)}\ast_0\Delta_{(1,h)}$ where we set $p:=(r_1^{x_1},\ldots,r_{k-1}^{x_{k-1}})$ and $h:=(1^{x_k})$.  
By the definition of affine free sums, we know the free sum $\Delta_{(1,p)}\ast_0\Delta_{(1,h)}$ is structured so that $\Delta_{(1,p)}$ is a face of $\Delta_{(1,q)}$.  
We know that $\Delta_{(1,q)} = \Delta_{(1,p)}\ast_0\Delta_{(1,h)}$ is IDP, and so each of its faces must also be IDP.  
\end{proof}

The final result in this section shows that for an $r$-vector such that $r_i \nmid r_k$ for some $i$, there are at most finitely many such IDP reflexive simplices supported on that vector. 
Each of these simplices may or may not be decomposable as an affine free sum.  

\begin{theorem}
\label{thm: finite reflexive}
For a fixed support vector $r$, there exist only finitely many reflexive and IDP $\Delta_{(1,q)}$ supported by $r$ that do not admit a free sum decomposition into lower dimensional $\Delta_{(1,q)}$.
\end{theorem}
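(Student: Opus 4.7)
The plan is to split into two cases according to whether the largest support element $r_k$ is divisible by every smaller $r_j$ or not, combining Proposition~\ref{prop:freesumreflexive} with a polyhedral boundedness argument applied to the linear system from Lemma~\ref{lem:linearsystem}.

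In the first case, suppose $r_j \mid r_k$ for every $j < k$. Then Proposition~\ref{prop:freesumreflexive} applies directly and shows that every reflexive IDP simplex $\Delta_{(1,q)}$ supported by $r$ decomposes as the affine free sum $\Delta_{(1,(r_1^{x_1},\ldots,r_{k-1}^{x_{k-1}}))} *_0 \Delta_{(1,(1^{x_k}))}$. Thus there are no non-decomposable examples at all to count, and the conclusion is trivial.

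In the second case, assume some $j < k$ has $r_j \nmid r_k$. By Lemma~\ref{lem:linearsystem}, the multiplicity vector $x = (x_1,\ldots,x_k)$ of any reflexive IDP simplex supported by $r$ must satisfy the fixed linear system $Rx = b$, so it suffices to bound the number of positive integer solutions. My plan is to show that the real kernel $W := \ker R$ meets the non-negative orthant only at the origin. Every entry of $R$ lies in $\{0, r_i, r_i \bmod r_j\}$ and is therefore non-negative. Consequently, if $v \in W$ satisfies $v \geq 0$, then each summand $R_{j,i} v_i$ in the equation $Rv = 0$ must vanish separately. For any $i < k$, the subdiagonal entry $R_{i+1,i} = r_i \geq 1$ is strictly positive, forcing $v_i = 0$. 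For $i = k$, the case hypothesis provides some $j < k$ with $R_{j,k} = r_k \bmod r_j > 0$, forcing $v_k = 0$ as well. Hence $v = 0$.

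Once $W \cap \R^k_{\geq 0} = \{0\}$ is established, the affine slice $\{x \in \R^k : Rx = b\} \cap \R^k_{\geq 0}$ is a polyhedron with trivial recession cone, hence is bounded; its intersection with $\Z^k$ is therefore finite, yielding only finitely many positive integer solutions $x$ and in turn only finitely many reflexive IDP simplices $\Delta_{(1,q)}$ supported by $r$, non-decomposable or otherwise. The main subtlety is the nullspace computation, which rests on the strict positivity of both the subdiagonal of $R$ and of at least one entry of its final column; both use only the ordering $r_1 < \cdots < r_k$ and the case hypothesis. The passage from a trivial non-negative kernel to boundedness is the standard recession-cone characterization of polyhedra.
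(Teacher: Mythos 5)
Your proposal is correct and follows essentially the same route as the paper: dispose of the case $r_i \mid r_k$ for all $i$ via Proposition~\ref{prop:freesumreflexive}, then bound the positive solutions of $Rx=b$ from Lemma~\ref{lem:linearsystem} when some $r_i \nmid r_k$. The only difference is cosmetic: the paper certifies boundedness by exhibiting a strictly positive vector in $\mathrm{im}(R^T)$ (the sum of the $k$-th and $i$-th rows), whereas you show directly that $\ker(R)\cap\R^k_{\geq 0}=\{0\}$ and invoke the recession-cone characterization — two standard, equivalent ways of making the same polyhedral argument.
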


\begin{proof}
Fix a support vector $r = (r_1,\ldots,r_k)$.  
By Proposition~\ref{prop:freesumreflexive} we know that if $r_i\,\mid\, r_k$ for all $i\in\{1,\ldots,k-1\}$, then any $\Delta_{(1,q)}$ supported by $r$ is an affine free sum of lower-dimensional $\Delta_{(1,q)}$.  
Thus, it remains to show that if $r_i\nmid r_k$ for at least one $i$ in $\{1,\ldots,k-1\}$ then there are at most finitely many reflexive and IDP $\Pq$ such that $q$ is supported by $r = (r_1,\ldots,r_k)$.  
To see this, notice that if $\Pq$ is reflexive and IDP and supported by $r$, then using the notation from Lemma~\ref{lem:linearsystem} there is a positive integer vector $x$ that satisfies $Rx=b$.
The set of $\Pq$ that are reflexive and IDP and supported by $r$ corresponds to the integer points in $\R_{>0}^k\cap \left( x+\ker(R)\right)$.
Since $\ker(R)$ is orthogonal to the image of $R^T$, it follows from $r_i\nmid r_k$ for some $i$ that there exists a strictly positive vector $a\in \textrm{im}(R^T)$ orthogonal to $\ker(R)$, namely the vector $a$ obtained as the sum of the transpose of the $k$-th row of $R$ and the transpose of the $i$-th row of $R$.
Hence, $\R_{>0}^k\cap \left( x+\ker(R)\right)$ is bounded, and therefore contains at most a finite number of integer points.
\end{proof}


\section{$q$-Vectors With Support on Two Integers}
\label{sec: support on two integers}

Using the results established in Sections~\ref{sec: idp for reflexive simplices} and~\ref{sec: q-vectors with fixed support}, we are now able to prove that all IDP and reflexive $\Delta_{(1,q)}$ supported on two integers are $h^\ast$-unimodal.
Let $q = (r_1^{x_1},r_2^{x_2})$ denote a $q$-vector supported on two vectors.  
By Lemma~\ref{lem:linearsystem}, in order to capture all IDP and reflexive simplices supported on two integers we will have two cases given by the possible divisibility relations between $r_1=r$ and $r_2=s$, as indicated in the following theorem.
\begin{theorem}
\label{thm: examples of sequences}
Let $r<s$ be positive integers and let
\[
q=(\underbrace{r,r,\ldots,r}_{m\text{ times}},\underbrace{s,s,\ldots,s}_{x\text{ times}})=(r^m,s^x) \, .
\]
Then $\Pq$ is reflexive if and only if $r\mid(1+sx)$ and $s\mid(1+rm)$.
Further, $\Pq$ is IDP and reflexive if and only if either 
\begin{enumerate}
\item $r\neq 1$ with $s=1+rm$ and $x=r-1$, or 
\item $r=1$ with $s=1+m$ and $x$ arbitrary.
\end{enumerate}
\end{theorem}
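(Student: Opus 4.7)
My plan is to handle the reflexivity and IDP characterizations separately. Reflexivity for $q = (r^m, s^x)$ follows directly from the divisibility condition $q_j \mid (1 + \sum_{i\neq j}q_i)$: after absorbing the multiples of $r$ or $s$ appearing in the sums, this collapses to exactly the two conditions $r \mid (1 + sx)$ and $s \mid (1 + rm)$, and the converse direction is automatic.

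For the necessity half of the IDP characterization I would invoke Corollary~\ref{cor: reduction of theorem 4.1}. Applied at an $s$-index, the identity reads $1/s + m\{r/s\} = 1$, which, using $r < s$, forces $s = 1 + rm$. When $r > 1$, the identity at an $r$-index reads $1/r + x\{s/r\} = 1$; substituting $s = 1 + rm$ gives $\{s/r\} = 1/r$, forcing $x = r - 1$. When $r = 1$ the $r$-index identity is vacuous and only $s = 1 + m$ survives, with $x$ arbitrary, giving the two cases listed.

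For sufficiency, case (2) is a one-line application of Theorem~\ref{thm:freesumdecomp}: the $q$-vectors $(1^m)$ and $(1^x)$ define unimodular (hence IDP) reflexive simplices, and the theorem packages their affine free sum as the $\Delta_{(1,y)}$ with $y = (1^m, (1+m)^x)$, which inherits IDP from its summands. Case (1) is the substantive piece, and I would verify the hypotheses of Theorem~\ref{thm:icreflexives} facet by facet. An $r$-facet is straightforward: a short computation shows that the height at parameter $b \in \{1, \ldots, r-1\}$ equals $b$, so $c = 1$ is the only height-$1$ witness, and equation~\eqref{eqn 2} reduces to trivial identities because each $\lfloor bq_i/r\rfloor$ is either $b$ or $bm$ (using $b < r$ to kill the extra fractional part from $\lfloor b(1+rm)/r\rfloor$). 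For an $s$-facet, the height at $b$ simplifies to $(b + m(br \bmod s))/s$, and using $\gcd(r,s) = 1$ together with $mr = s - 1$ this equals $\lceil m(br \bmod s)/s\rceil$; so the height is $1$ exactly when $br \bmod s \leq r$ and is $\geq 2$ exactly when $br \bmod s \geq r + 1$.

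The key step is then exhibiting a single universal witness: $c = 1$ always works on the $s$-facets as well. Its height is $\lceil mr/s\rceil = 1$, and equation~\eqref{eqn 2}, after discarding the trivially satisfied contributions from $s$-indices, is equivalent to the ``no-carry'' condition $(cr \bmod s) + ((b-c)r \bmod s) < s$, which reduces to $(cr \bmod s) \leq (br \bmod s)$. For $c = 1$ this becomes $r \leq br \bmod s$, which holds for every $b$ with height $\geq 2$. The main obstacle is the bookkeeping that establishes the $\lceil \cdot \rceil$-form for the $s$-facet height and the no-carry reformulation of equation~\eqref{eqn 2}; once these are in hand, the single choice $c = 1$ uniformly certifies IDP at every facet and closes the theorem.
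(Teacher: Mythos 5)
Your proposal is correct and follows essentially the same route as the paper's proof: Corollary~\ref{cor: reduction of theorem 4.1} for the necessity of the two cases, the affine free sum construction for case (2), and a direct facet-by-facet verification of Theorem~\ref{thm:icreflexives} with the universal witness $c=1$ for case (1). The only difference is presentational: your $s$-facet bookkeeping via $br \bmod s$ and the no-carry criterion $(cr \bmod s) \leq (br \bmod s)$ is an equivalent repackaging of the paper's decomposition $b = km + \ell$ (indeed $br \bmod s = \ell r - k$ there), and both single out the same height-one witnesses.
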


\begin{proof}
The claim regarding reflexivity follows immediately from the condition that $\Pq$ is reflexive if and only if $q_j\mid(1+q_1+\cdots+q_n)$ for all $j$.
To show that $\Delta_{(1,q)}$ being reflexive IDP implies $s=1+rm$ and $x=r-1$ when $r\neq 1$, we apply Corollary~\ref{cor: reduction of theorem 4.1}.
First, applying the corollary with $q_j=s$ we see that
\[
\frac{1}{s}+m\left\{\frac{r}{s} \right\}+(x-1)\left\{\frac{s}{s}\right\} = \frac{1+mr}{s}=1 \, ,
\]
and thus we have $1+mr=s$.
Second, setting $q_j=r$ and applying the corollary yields
\[
\frac{1}{r}+(m-1)\left\{\frac{r}{r} \right\}+x\left\{\frac{s}{r}\right\} = \frac{1}{r}+x\left\{\frac{1+mr}{r}\right\} = \frac{1+x}{r}=1 \, ,
\]
from which it follows that $x=r-1$.
A similar argument shows that if $r=1$, then $1+m=s$ and $x$ can be an arbitrary positive integer.

To show the converse, first observe that Theorem~\ref{thm:oneinteger} and Proposition~\ref{prop:freesumreflexive} tells us that if $q=(1^m,(1+m)^x)$ for any positive integer $x$, then $\Pq$ is reflexive IDP.
Thus, we are left with the case that $r\neq 1$ and $s=1+mr$ with $x=r-1$.
Corollary~\ref{cor: reduction of theorem 4.1} guarantees reflexivity; to verify that $\Pq$ is IDP in this case, we will directly apply Theorem~\ref{thm:icreflexives}.

First, consider the case where $q_j=r$, hence $b=1,\ldots,r-1$.
Since $1\leq b<r$, it follows that the left-hand side of~\eqref{eqn 1} reduces to
\[
b(rm-m+1)-(r-1)\left\lfloor\frac{brm+b}{r}\right\rfloor = b(rm-m+1)-(r-1)bm=b\, .
\]
Thus, our only option for the $c$ value in equations \eqref{eqn 2} and \eqref{eqn 3} is $c=1$.
We need to check that \eqref{eqn 2} holds when $q_i=rm+1$, $c=1$, and $b=2,3,\ldots,r-1$, which follows since
\[
m+(b-1)m=bm
\]
if and only if
\[
m+\left\lfloor \frac{(b-1)mr+(b-1)}{r}\right\rfloor = \left\lfloor\frac{brm+b}{r}\right\rfloor
\]
if and only if
\[
\left\lfloor\frac{rm+1}{r}\right\rfloor + \left\lfloor \frac{(b-1)(mr+1)}{r}\right\rfloor = \left\lfloor\frac{b(rm+1)}{r}\right\rfloor \, .
\]

Second, consider the case where $q_j=rm+1$, hence $b=1,2,\ldots,rm$.
For this value of $q_j$, the left-hand side of~\eqref{eqn 1} reduces to the following expression, which we denote by $h(b)$:
\begin{equation}\label{eqn 4}
h(b):=b-m\left\lfloor\frac{br}{mr+1}\right\rfloor
\end{equation}
Writing $b=km+\ell$ where $k,\ell \in \Z$, $0<\ell\leq m$, and $0\leq k<r$, we have
\begin{equation}\label{eqn:floorref}
\left\lfloor\frac{br}{mr+1}\right\rfloor=\left\lfloor\frac{(km+\ell)r}{mr+1}\right\rfloor=k+\left\lfloor\frac{r\ell-k}{mr+1}\right\rfloor=k \, .
\end{equation}
Thus, $h(b)=h(km+\ell)=\ell$.

Thus, the viable candidates for $c$-values in \eqref{eqn 2} and \eqref{eqn 3} are 
\[
c=1,m+1,2m+1,\ldots,(r-1)m+1 \, .
\]
To complete our proof, we observe that~\eqref{eqn 2} holds for all $b=mk+\ell$ with $2\leq \ell \leq m$ whenever $c=k'm+1$ for $k'\leq k$, since~\eqref{eqn:floorref} implies
\[
\left\lfloor\frac{(km+\ell)r}{mr+1}\right\rfloor-\left\lfloor\frac{(k'm+1)r}{mr+1}\right\rfloor = \ell-1 = \left\lfloor\frac{((k-k')m+\ell-1)r}{mr+1}\right\rfloor \, .
\]
\end{proof}

It remains to be shown that all IDP reflexive $\Delta_{(1,q)}$ supported on two integers are $h^\ast$-unimodal.  
\begin{theorem}
\label{thm: main theorem}
Let $\Delta_{(1,q)}$ be a reflexive simplex supported on two integers.  If $\Delta_{(1,q)}$ is IDP then it is $h^\ast$-unimodal.  
\end{theorem}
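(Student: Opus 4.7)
My plan is to use Theorem~\ref{thm: examples of sequences}, which splits IDP reflexive $\Delta_{(1,q)}$ with two-integer support into two families: $q = (1^m, (m+1)^x)$ and $q = (r^m, (rm+1)^{r-1})$ with $r \ge 2$.

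For the first family, $\Delta_{(1,q)}$ coincides with the affine free sum $\Delta_{(1,(1^m))} \ast_0 \Delta_{(1,(1^x))}$ via Theorem~\ref{thm:freesumdecomp} (using $s=m+1$). Each summand is a standard reflexive simplex that is IDP and $h^*$-unimodal by Theorem~\ref{thm:oneinteger}, so the final clause of Theorem~\ref{thm:freesumdecomp} transports both properties to $\Delta_{(1,q)}$.

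For the second family, the simplex does not decompose as an affine free sum into reflexive pieces, so I proceed by explicit computation. Theorem~\ref{thm:hstarq} gives
\[
h^*(\Delta_{(1,q)};z) = \sum_{b=0}^{r(rm+1)-1} z^{w(b)}, \qquad w(b) = b - m\left\lfloor \frac{b}{rm+1}\right\rfloor - (r-1)\left\lfloor \frac{b}{r}\right\rfloor.
\]
Writing each $b$ uniquely as $b = k(rm+1) + \ell$ with $0 \le k \le r-1$ and $0 \le \ell \le rm$, and setting $t = k+\ell$, a short simplification yields $w(b) = t - (r-1)\lfloor t/r \rfloor$; writing $t = ra + c$ with $0 \le c \le r-1$ and $0 \le a \le m$ further reduces this to $w = a + c$. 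Counting the pairs $(k,\ell)$ with $k+\ell = t$ (namely $t+1$ for $0 \le t \le r-1$, then $r$ for $r-1 \le t \le rm$, then $rm+r-t$ for $rm \le t \le rm+r-1$) and grouping terms by $a$, one arrives at the factorization
\[
h^*(\Delta_{(1,q)};z) = A(z) + rz \cdot [r]_z \cdot [m-1]_z + z^m B(z),
\]
where $A(z) := \sum_{c=0}^{r-1}(c+1)z^c$, $B(z) := \sum_{c=0}^{r-1}(r-c)z^c$, and $[n]_z := 1 + z + \cdots + z^{n-1}$.

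To finish, I extract the coefficients $c_j := [z^j]h^*(\Delta_{(1,q)};z)$ from this factorization. I plan to show, via case analysis on whether $r \le m$ or $r > m$, that $(c_j)$ equals the arithmetic progression $1 + (r+1)j$ up to $j = \min(r-1,m)$, remains constant on a nonempty plateau at its maximum value, and descends by the reflexivity-induced symmetry thereafter. The main obstacle is the bookkeeping at the transition indices where two or three of $A$, $rz[r]_z[m-1]_z$, and $z^m B$ contribute to $c_j$ simultaneously; one must verify that the piecewise formulas glue into a single unimodal sequence, with plateau lengths that differ qualitatively between the subcases $r \le m$ and $r > m$ but in both cases yield the desired monotonicity.
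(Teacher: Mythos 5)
Your proposal is correct and follows the same overall strategy as the paper: the identical case split via Theorem~\ref{thm: examples of sequences}, the identical free-sum argument for the $r=1$ family, and an explicit computation of $h^\ast$ for $q=(r^m,(rm+1)^{r-1})$ that lands on exactly the paper's closed form, since $A(z)+rz\,[r]_z[m-1]_z+z^mB(z)$ is precisely $\sum_{j=0}^{r-1}(j+1)z^j + r\bigl(\sum_{q=1}^{m-1}z^q\bigr)\bigl(\sum_{j=0}^{r-1}z^j\bigr)+\sum_{j=0}^{r-1}(r-j)z^{m+j}$. Your route to that formula is cleaner than the paper's: substituting $b=k(rm+1)+\ell$ collapses $\omega(b)$ to $t-(r-1)\lfloor t/r\rfloor$ with $t=k+\ell$ in one step, whereas the paper peels off the floor functions through two nested regroupings of the sequence $W$. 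Where you diverge is the endgame: the paper splits $h^\ast=p(z)+q(z)$, notes both are symmetric about degree $r+m-1$, and disposes of unimodality by a dichotomy on whether $q(z)$ has internal zeros (using $r+p_{r-1}\le p_r$ to absorb the dip when $m>r$); you instead propose direct coefficient extraction. Your sketched structure is in fact correct and the bookkeeping you defer does go through: $c_j=1+(r+1)j$ for $0\le j\le\min(r-1,m)$, then a plateau of value $r^2$ on $r-1\le j\le m-1$ when $r\le m$, respectively of value $rm+m+1$ on $m\le j\le r-1$ when $r>m$, and in both cases the plateau contains the center of symmetry $(r+m-1)/2$, so the symmetric descent finishes the argument. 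The one point you should make explicit when writing this up is that last containment --- without it, ``descends by symmetry thereafter'' does not by itself yield unimodality --- but it is a two-line verification, so there is no real gap.
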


\begin{proof}
By Theorem~\ref{thm: examples of sequences}, we know that $\Delta_{(1,q)}$ with $q = (r^m,s^x)$ is reflexive and IDP if and only if either $r\neq 1$ with $s=1+rm$ and $x=r-1$ or $r=1$ with $s=1+m$ and $x$ arbitrary.  
In the latter case, $\Pq$ is the affine free sum $\Delta_{(1,q_1)}\ast_0\Delta_{(1,q_2)}$, where $q_1 = (1^m)$ and $q_2 = (1^x)$.
By Theorem~\ref{thm:oneinteger} and Theorem~\ref{thm:freesumdecomp}, it follows that $\Pq$ is $h^\ast$-unimodal.  
Thus, it only remains to prove $h^\ast$-unimodality for the former case.  

For the sake of clarity, in the following we will let $\Delta_{r,m}$ denote the simplex $\Delta_{(1,q)}$, where $q=(r^m,(1+rm)^{r-1})$ with $r\neq 1$.  
Recall that the formula given in Theorem~\ref{thm:hstarq} reduces to 
\[
h^\ast(\Delta_{r,m};z) =
\sum_{b=0}^{r(rm+1)-1}z^{\omega(b)},
\]
where
\[
\omega(b) = b-m\left\lfloor\frac{b}{rm+1}\right\rfloor-(r-1)\left\lfloor\frac{b}{r}\right\rfloor.
\]
To compute $h^\ast(\Delta_{r,m};z)$ as to reveal its unimodality, we first consider the sequence
\[
W:=\left(b-(r-1)\left\lfloor\frac{b}{r}\right\rfloor\right)_{b=0}^{r(rm+1)-1}.
\]
We will now group the terms in the sequence $W$ into subsequences of length $r$ and collect these into blocks of $m$ subsequences, since this will help make the desired unimodality of $h^\ast(\Delta_{r,m};z)$ more apparent.  
The grouping of terms into subsequences of length $r$ corresponds to mapping each term of $W$
\[
b- (r-1)\left\lfloor\frac{b}{r}\right\rfloor\longmapsto (i,j),
\]
where $i$ and $j$ are the unique nonnegative integers satisfying $b = ir+j$ and $0\leq j< r$.  
This partitions $W$ into subsequences given by
\begin{equation*}
\begin{split}
W 
&=  \left(b-(r-1)\left\lfloor\frac{b}{r}\right\rfloor\right)_{b=0}^{r(rm+1)-1}, \\
&=  \left(\left(i+j\right)_{j=0}^{r-1}\right)_{i=0}^{rm}. \\
\end{split}
\end{equation*}
The grouping of the subsequences of length $r$ into sequences of $m$ subsequences corresponds to the extending this map as
\[
b- (r-1)\left\lfloor\frac{b}{r}\right\rfloor\longmapsto (i,j,p),
\]
where $p=\left\lfloor\frac{i}{m}\right\rfloor$.
That is, for each $i$ we will write $i = pm + q$ for integers $0 \leq p < r$ and $0 \leq q < m$.
This lets us further decompose $W$ as
\begin{equation}
\label{eqn: partitioned W}
	W =  \left(\left(\left(\left(pm+q+j\right)_{j=0}^{r-1}\right)_{q=0}^{m-1}\right)_{p=0}^{r-1},\left(rm+j\right)_{j=0}^{r-1}\right).
\end{equation}
To compute the powers $z^{\omega(b)}$ from these terms, we must subtract the value $m\left\lfloor\frac{b}{rm+1}\right\rfloor$ from each term in $W$. 
Doing so, we will write the corresponding sequence as
\[
	W' := \left(b-(r-1)\left\lfloor\frac{b}{r}\right\rfloor - m\left\lfloor\frac{b}{rm+1}\right\rfloor\right)_{b=0}^{r(rm+1)-1}.
\]
Notice that, since $b = (pm+q)r + j$, we also have
\[
\begin{split}
	\left\lfloor\frac{b}{rm+1}\right\rfloor &= \left\lfloor\frac{p(rm+1)+qr+j-p}{rm+1}\right\rfloor, \\
		&= p + \left\lfloor\frac{qr+j-p}{rm+1}\right\rfloor, \\
		&= \begin{cases}
			p-1 & \text{ if } q=0 \text{ and }  j<p ,\\
			p & \text{ otherwise}.
			\end{cases}
\end{split}
\]
Thus, applying this fact together with the expression for $W$ given in equation~\eqref{eqn: partitioned W}, we have that
\[
	W' = \left(\left(\left((q+j)_{j=0}^{r-1}\right)_{q=1}^{m-1}\right)_{p=1}^{r-1}, \left(\left((m+j)_{j=0}^{p-1},(j)_{j=p}^{r-1}\right)\right)_{p=1}^{r-1}, \left((q+j)_{j=0}^{r-1}\right)_{q=0}^{m-1},(m+j)_{j=0}^{r-1}\right).
\]

It follows that the powers $z^{\omega(b)}$ sum to yield the $h^\ast$-polynomial as
\[
\begin{split}
	h^\ast(\Delta_{r,m};z) &= \sum_{w \in W'} z^w, \\
		&= \sum_{p=1}^{r-1} \left(\sum_{q=1}^{m-1} z^q\right)\left(\sum_{j=0}^{r-1} z^j\right) + \sum_{p=1}^{r-1}\left(\sum_{j=0}^{p-1} z^{m+j} + \sum_{j=p}^{r-1} z^j\right),\\
		&\quad + \left(\sum_{q=0}^{m-1} z^q\right)\left(\sum_{j=0}^{r-1} z^j\right) + \sum_{j=0}^{r-1} z^{m+j}, \\
		&= (r-1)\left(\sum_{q=1}^{m-1} z^q\right)\left(\sum_{j=0}^{r-1} z^j\right) + \sum_{k=0}^{r-2} (r-1-k)z^{m-k} + \sum_{k=1}^{r-1} kz^k,  \\
		&\quad + \left(\sum_{q=1}^{m-1} z^q\right)\left(\sum_{j=0}^{r-1} z^j\right) + \sum_{j=0}^{r-1} z^j + \sum_{j=0}^{r-1} z^{m+j}, \\
		&= r\left(\sum_{q=1}^{m-1} z^q\right)\left(\sum_{j=0}^{r-1} z^j\right) + \sum_{j=0}^{r-1} (r-j)z^{m+j} + \sum_{j=0}^{r-1} (j+1)z^j.
\end{split}
\]

Now, let
\[
	p(z) = \sum_{i=0}^{r+m-1} p_iz^i := r\left(\sum_{q=1}^{m-1} z^q\right)\left(\sum_{j=0}^{r-1} z^j\right),
\]
and
\[
	q(z) = \sum_{i=0}^{r+m-1} q_iz^i :=  \sum_{j=0}^{r-1} (r-j)z^{m+j} + \sum_{j=0}^{r-1} (j+1)z^j.
\]
Since $h^\ast(\Delta_{(r,n)};z)$ is symmetric with respect to degree $r+m-1$, then to see that it is unimodal it suffices to observe that the $(r-1)^{st}$ coefficient is at most the $r^{th}$ coefficient.  
To see this, first note that $p(z)$ is also symmetric with respect to degree $r+m-1$ and unimodal. 
We then consider the following two cases: first, suppose $q(z)$ has no internal zeros; that is, there are no three indices $0 \leq i < j < k \leq r+m-1$ such that $q_i,q_k \neq 0$ and $q_j = 0$.
Then $q(z)$ is also symmetric with respect to degree $r+m-1$ and unimodal. 
Thus, $h^{\ast}(\Delta_{r,m};z)$ is unimodal. 
Second, suppose $q(z)$ does have internal zeros. 
This forces $q_{r-1} = r$ while $q_r = 0$. 
However, we know that $r+p_{r-1} \leq p_r$.  
Since $q(z)$ is symmetric with respect to degree $r+m-1$, we have that $h^{\ast}(\Delta_{r,m};z)$ is again unimodal. 
This completes the proof.
\end{proof}

\section{Future Directions} \label{subsec: future directions}

The results of this paper set the stage for a detailed investigation of Conjecture~\ref{conj:hibiohsugi} in the special case of the reflexive simplices of the form $\Delta_{(1,q)}$.  
As such, these results provide some natural directions for future work in discrete geometry as outlined by the following questions.

First, the vector $q=(3,20,24,24,24,24)$ satisfies Lemma~\ref{lem:linearsystem}, yet is non-IDP and has a non-unimodal $h^\ast$-vector, specifically 
\[
h^\ast(\Pq;z)=z^6+16z^5+29z^4+28z^3+29z^2+16z+1\, .
\]
However, it is not clear in general what role Lemma~\ref{lem:linearsystem} plays regarding $h^\ast$-unimodality and the structure of Hilbert bases for $\Pq$, leading to the following question.
\begin{question} 
\label{quest: linear system}
	Suppose $r = (r_1,\ldots,r_k)$ supports a reflexive simplex $\Pq$ with $q$-vector satisfying Lemma~\ref{lem:linearsystem}.
        What constraints on the Hilbert basis of $\cone{\Pq}$, if any, are implied by the linear system in Lemma~\ref{lem:linearsystem}?
        Further, for which divisibility patterns in the $r$-vector, if any, is Lemma~\ref{lem:linearsystem} sufficient to imply $h^\ast$-unimodality for $\Pq$?
        Which families of $q$-vectors, if any, both satisfy Lemma~\ref{lem:linearsystem} and fail to be $h^\ast$-unimodal?
\end{question}

Second, in Section~\ref{sec: support on two integers}, $h^\ast$-unimodality for some, but not all, of the cases studied follows from an affine free sum decomposition of the simplices.
Given this, it is natural to ask when the free sum construction underlies $h^\ast$-unimodality in the context of Section~\ref{sec: q-vectors with fixed support}.
\begin{question}
\label{quest: free sums}
	For each fixed dimension $d$, what fraction of reflexive IDP simplices of the form $\Pq$ arise via affine free sums?
\end{question}
Third, the reflexive simplices satisfying the necessary condition of Lemma~\ref{lem:linearsystem} correspond to positive integral solutions to multivariate systems of algebraic equations.  
A full understanding of these simplices may require tools from algebraic geometry and algebraic number theory.
\begin{question}
\label{quest: number theory}
What are the integral solutions to the multivariate system of algebraic equations arising from Lemma~\ref{lem:linearsystem}?
\end{question}

Fourth, as far as the authors know, in all published proofs that a family of polytopes is both IDP and $h^\ast$-unimodal, IDP and $h^\ast$-unimodality are established separately.
Thus, the IDP has only been observed to correlate with $h^\ast$-unimodality; no proofs have shown a causal link between the two properties.
For example, in well-known work of Bruns and R\"omer~\cite{BrunsRomer}, the IDP condition is a consequence of the existence of a regular unimodular triangulation, while $h^\ast$-unimodality is established by using that triangulation to induce an application of the $g$-theorem.
Thus, there is a need for proofs that demonstrate explicitly how the IDP condition might be used to prove $h^\ast$-unimodality directly.
The following question is a step in this direction.
\begin{question}
\label{quest:IDPimpliesh*}
Do there exist $q$-vectors for which $h^\ast$-unimodality of $\Pq$ can be established using only the conditions of Theorem~\ref{thm:icreflexives}?
\end{question}

As a final remark, the results of this paper offer new avenues by which to search for counterexamples to Conjecture~\ref{conj:hibiohsugi}.  
In the special case of the simplices $\Pq$, the expression of the $h^\ast$-polynomial arising from Theorem~\ref{thm:hstarq} provides a direct link between the structure of a $q$-vector and the behavior of the associated $h^\ast$-polynomial.
A deeper study of general polynomials of the form given in Theorem~\ref{thm:hstarq} might provide insight leading to a counterexample, if one exists.
Another possible direction in which to search for counterexamples is suggested by Question~\ref{quest: linear system}, through the identification of broad families of $q$-vectors that simultaneously satisfy Lemma~\ref{lem:linearsystem} and fail to be $h^\ast$-unimodal.
Even if a counterexample is found to the general conjecture, improved explanations of the broadly-observed $h^\ast$-unimodality for reflexive polytopes, including in the special case of $\Pq$, is needed.

\bibliographystyle{plain}
\bibliography{Braun}

\end{document}